\documentclass[11pt]{amsart}
\usepackage[margin=1.1in]{geometry}
\usepackage{amsmath,amssymb,amsthm}
\usepackage{enumitem}
\usepackage{tikz}
\usepackage{xcolor}
\usepackage[colorlinks=true,linkcolor=blue!55!black,citecolor=blue!55!black,urlcolor=blue!55!black]{hyperref}

\newtheorem{maintheorem}{Theorem}

\newtheorem{theorem}{Theorem}[section]
\newtheorem{proposition}[theorem]{Proposition}
\newtheorem{lemma}[theorem]{Lemma}
\newtheorem{corollary}[theorem]{Corollary}
\theoremstyle{definition}
\newtheorem{definition}[theorem]{Definition}
\theoremstyle{remark}
\newtheorem{remark}[theorem]{Remark}

\numberwithin{equation}{section}

\newcommand{\RP}{\mathbb{RP}}
\newcommand{\R}{\mathbb{R}}
\newcommand{\Z}{\mathbb{Z}}
\newcommand{\PP}{\mathbb{P}}
\newcommand{\tr}{\operatorname{tr}}
\newcommand{\cA}{\mathcal{A}}
\newcommand{\GL}{\mathrm{GL}(3,\R)}
\newcommand{\SL}{\mathrm{SL}(3,\R)}
\newcommand{\ip}[2]{\langle #1,#2\rangle}

\title[Point-balanced arrangements and the Hirzebruch property]{Point-balanced arrangements in the real projective plane and the Hirzebruch property}
\author{M. de Borbon}
\address{Department of Mathematical Sciences, Loughborough University, Loughborough LE11 3TU, United Kingdom}
\email{m.de-borbon@lboro.ac.uk}
\date{}
\subjclass[2020]{Primary 52C30; Secondary 14N20, 20F55, 14L24}

\begin{document}

\begin{abstract}
A collection of $m \geq 2$ lines through the origin in $\mathbb{R}^2$ is balanced if the sum of the orthogonal projections onto these lines equals $m/2$ times the identity. A line arrangement in $\mathbb{RP}^2$, endowed with the round metric of curvature $1$, is point-balanced if the tangent lines at every vertex form a balanced collection. A line arrangement $\mathcal{A}$ in $\mathbb{RP}^2$ has the Hirzebruch property if it consists of $3k$ lines and every line contains exactly $k+1$ vertices.

Using a Kempf--Ness convexity argument, we show that an irreducible arrangement $\mathcal{A}$ has the Hirzebruch property if and only if its projective equivalence class contains a point-balanced representative, unique up to orthogonal transformations. We then combine elementary properties of balanced collections of lines in $\mathbb{R}^2$ with spherical geometry to prove that every irreducible point-balanced arrangement is a reflection arrangement. This gives a new proof of Panov's classification of real Hirzebruch arrangements.
\end{abstract}

\maketitle

\section{Introduction}\label{sec:intro}

\subsection{Main results}
An \emph{arrangement} is a finite set \(\cA\) of lines in \(\RP^2\) that is \emph{essential}: its lines do not all pass through a common point, or equivalently, the linear forms defining them span \((\R^3)^*\). A \emph{vertex} of \(\cA\) is a point lying on at least two of its lines. The arrangement is a \emph{near-pencil} if all of its lines but one pass through a common point. We say that \(\cA\) is \emph{irreducible} if it is not a near-pencil. 
A matrix \(F\in\GL\) acts on \(\RP^2\) by \(F[x]=[Fx]\) and maps lines to lines, so \(F(\cA)=\{F(C):C\in\cA\}\) is again an arrangement. Two arrangements \(\cA_0\) and \(\cA\) are \emph{projectively equivalent} if \(\cA=F(\cA_0)\) for some \(F\in\GL\), and \emph{\(O(3)\)-equivalent} if \(F\) can be chosen in \(O(3)\).

Following \cite{Hirzebruch85,Panov}, we consider the following class of arrangements.

\begin{definition}\label{def:hirzebruch}
The arrangement \(\cA\) has the \emph{Hirzebruch property}, or is a \emph{Hirzebruch arrangement}, if the number of lines is \(3k\) and every line of \(\cA\) contains exactly \(k+1\) vertices of \(\cA\).
\end{definition}

We endow \(\RP^2\) with the round metric of curvature \(1\) induced by the standard Euclidean inner product on \(\R^3\).

\begin{definition}\label{def:pointbalanced}
Let \(v\) be a vertex of the arrangement \(\cA\), let \(\cA_v\) be the set of lines of \(\cA\) through \(v\), and set \(V=T_v\RP^2\), with identity \(I_V\). For \(C\in\cA_v\), let \(\mathrm{pr}_{T_vC}\colon V\to V\) be the orthogonal projection onto the tangent line \(T_vC\) of \(C\) at \(v\). We say that \(\cA\) is \emph{balanced at \(v\)} if
\[
\sum_{C\in\cA_v}\mathrm{pr}_{T_vC}=\frac{|\cA_v|}2\,I_V .
\]
The arrangement \(\cA\) is \emph{point-balanced} if it is balanced at every vertex.
\end{definition}

\begin{maintheorem}\label{thm:correspondence}
Let \(\cA\) be an irreducible arrangement. Then the following are equivalent:
\begin{enumerate}[label=\textup{(\roman*)}]
\item \(\cA\) has the Hirzebruch property;
\item \(\cA\) is projectively equivalent to a point-balanced arrangement.
\end{enumerate}
\end{maintheorem}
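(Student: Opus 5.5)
The plan has two directions. Both rest on a translation of the balance condition into linear algebra in \(\R^3\), followed by a counting identity along each line. Write each line \(C_i\in\cA\) (\(i=1,\dots,n\)) as \(C_i=\PP(a_i^\perp)\) with \(a_i\in\R^3\) a unit normal, and each vertex as \(v=[\hat v]\) with \(\hat v\) a unit vector. For \(C_i\ni v\), the tangent line \(T_vC_i\subset V=\hat v^\perp\) is the orthogonal complement of \(a_i\) inside \(V\). Since \(\dim V=2\), we have \(\mathrm{pr}_{T_vC_i}=I_V-\mathrm{pr}_{a_i}\), so balance at \(v\) is equivalent to
\[
B_v:=\sum_{i\in\cA_v}a_ia_i^T-\frac{m_v}{2}\bigl(I-\hat v\hat v^T\bigr)=0,\qquad m_v=|\cA_v| .
\]
Here \(B_v\) is a symmetric \(3\times3\) matrix with \(B_v\hat v=0\). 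Set \(S=\sum_i a_ia_i^T\) and let \(t_\ell\) be the number of vertices on the line \(\ell\).

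\textbf{Counting identity.} Fix a line \(\ell\) with normal \(a=a_\ell\), and evaluate \(a^TB_va=0\) at each vertex \(v\in\ell\). This gives \(1+\sum_{i\in\cA_v,\,i\neq\ell}\ip{a}{a_i}^2=m_v/2\). Sum over the \(t_\ell\) vertices of \(\ell\). Every other line meets \(\ell\) in exactly one vertex, so \(\sum_{v\in\ell}(m_v-1)=n-1\). We obtain
\[
t_\ell+\bigl(a_\ell^TSa_\ell-1\bigr)=\tfrac12\,(n-1+t_\ell),
\qquad\text{that is,}\qquad t_\ell=n+1-2\,a_\ell^TSa_\ell .
\]
Thus, for a point-balanced arrangement, the Hirzebruch property with \(n=3k\) is equivalent to \(a_\ell^TSa_\ell=k\) for every \(\ell\). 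The cleanest sufficient condition is that the normals form a tight frame, \(S=\tfrac n3 I\).

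\textbf{Direction (ii)\(\Rightarrow\)(i).} Point-balance and \(t_\ell\) are projectively invariant in the appropriate sense, so we may assume \(\cA\) itself is point-balanced. The task is then to show \(S=\tfrac n3I\). I would sum \(B_v=0\) over all vertices, which gives
\[
\sum_\ell t_\ell\,a_\ell a_\ell^T=\sum_v\tfrac{m_v}{2}\bigl(I-\hat v\hat v^T\bigr).
\]
Into the left side I would substitute \(t_\ell=n+1-2a_\ell^TSa_\ell\). Traces should then be compared using the incidence counts \(\sum_v m_v=\sum_\ell t_\ell\) and \(\sum_v\binom{m_v}2=\binom n2\). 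I would also compare with \(\tr(S^2)\ge n^2/3\), whose equality case is exactly \(S=\tfrac n3I\). The hope is that these relations close up to force equality, possibly together with a second identity obtained by pairing \(B_v\) against \(S\).

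\textbf{Direction (i)\(\Rightarrow\)(ii).} Here I would use Kempf--Ness. Lines transform by \(a_i\mapsto Ga_i/|Ga_i|\) with \(G=F^{-T}\), and vertices by \(\hat v\mapsto F\hat v/|F\hat v|\). On \(\SL\) modulo \(SO(3)\) consider the functional
\[
\Phi(G)=\sum_\ell t_\ell\log|Ga_\ell|+\sum_v\frac{m_v}{2}\log|G^{-T}\hat v| .
\]
It is invariant under left multiplication by \(SO(3)\), so it descends to the symmetric space of positive definite matrices of determinant \(1\). It is convex along geodesics \(e^{sX}\), being a sum of log-norms of the linear actions. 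Its differential at the identity in a traceless symmetric direction \(X\) is \(\tr\bigl(X\sum_vB_v\bigr)\).

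Properness (stability) is checked by the usual weight computation along one-parameter subgroups \(\mathrm{diag}(e^{s},e^{s},e^{-2s})\)-type degenerations centred at a point \(p\) or a line \(L\). It comes down to strict numerical inequalities: the weight of the lines through \(p\), or of the vertices on \(L\), must be less than one third of the total. Under the Hirzebruch hypothesis \(t_\ell=k+1\) these inequalities become combinatorial, and the only way they can degenerate to equalities is a near-pencil, which irreducibility excludes. This produces a minimiser, which is unique up to \(SO(3)\) by strict convexity; this also gives the uniqueness claimed in the abstract.

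\textbf{Main obstacle.} The difficult step is passing from the critical-point equation \(\sum_vB_v=0\) (up to a multiple of \(I\)) to the pointwise vanishing \(B_v=0\). My first attempt would be to contract \(\sum_vB_v\) with \(a_\ell a_\ell^T\). Each \(B_v\) with \(v\in\ell\) contributes \(\tfrac12(\ldots)\), and each \(B_v\) with \(v\notin\ell\) contributes a term of definite sign. Combined with \(t_\ell=k+1\) and the counting identity run backwards, this should force every local defect \(a_\ell^TB_va_\ell\) with \(v\in\ell\) to vanish. Since \(B_v\) restricted to \(V\) is a traceless symmetric \(2\times2\) matrix, vanishing against the at least two distinct directions \(a_\ell\), \(\ell\ni v\), already gives \(B_v=0\). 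If the sign bookkeeping fails, the fallback is to put weights on vertex–line flags in \(\Phi\), so that the moment map becomes the tuple \((B_v)_v\) itself rather than its sum.
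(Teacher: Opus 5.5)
Both directions of your proposal have a genuine gap.

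\textbf{(ii)$\Rightarrow$(i).} The trace relations you list cannot force $S=\tfrac n3I_3$. None of them uses irreducibility, and all of them hold for point-balanced near-pencils. For example, take $L_0=\PP(e_3^\perp)$ together with $n-1\ge3$ equally spaced lines through its pole $[e_3]$. Every double point has a right angle and the pole is balanced, yet
\[
S=e_3e_3^T+\tfrac{n-1}2\bigl(e_1e_1^T+e_2e_2^T\bigr)\ne\tfrac n3I_3 .
\]
The missing idea is to keep the vector equation rather than its quadratic form. For $v\in\ell$ we have $a_\ell\perp\hat v$, so $B_va_\ell=0$ gives $\sum_{i\in\cA_v}\ip{a_i}{a_\ell}a_i=\tfrac{m_v}2a_\ell$. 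Summing over $v\in\ell$ exactly as you did yields $Sa_\ell=\tfrac12(n+1-t_\ell)\,a_\ell$, so every normal is an eigenvector of $S$. If $S$ were not scalar, the normals would lie in three orthogonal lines (so $n\le3$) or in a plane and its orthogonal line (a near-pencil). Irreducibility therefore forces $S=\tfrac n3I_3$, and your counting identity then gives $t_\ell=\tfrac{n+3}3$.

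\textbf{(i)$\Rightarrow$(ii).} First, the differential of your $\Phi$ is not $\tr(X\sum_vB_v)$. The vertex term $\log|G^{-T}\hat v|$ has derivative $-\hat v^TX\hat v$ along $e^{sX}$, whereas $\tr(XB_v)$ contains $+\tfrac{m_v}2\hat v^TX\hat v$. Fixing the sign would make $\Phi$ a difference of convex functions. Second, the passage from a global equation to $B_v=0$ is left open, and your sign heuristic fails. For $v\notin\ell$, $a_\ell^TB_va_\ell=w^T(B_v|_V)w$, where $w$ is the component of $a_\ell$ in $\hat v^\perp$, and $B_v|_V$ is traceless, hence indefinite. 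The fallback is not available either, since any such functional on $\SL/SO(3)$ has gradient in the five-dimensional space of traceless symmetric matrices, which cannot encode the tuple $(B_v)_v$.

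What makes this direction work is an unconditional sum-of-squares identity, valid for every arrangement:
\[
\sum_v\|B_v\|^2=\Big\|S-\frac n3I_3\Big\|^2+\frac12\sum_\ell\Big(t_\ell-\frac{n+3}3\Big).
\]
It follows by expanding $\tr S^2=\sum_{i,j}\ip{a_i}{a_j}^2$ vertex by vertex, since each pair of distinct lines meets in exactly one vertex, and then using $\sum_vm_v(m_v-1)=n(n-1)$ and $\sum_vm_v=\sum_\ell t_\ell$. Under the Hirzebruch count the last sum vanishes, so point-balance becomes equivalent to the single global condition $S=\tfrac n3I_3$. That is the standard Kempf--Ness problem for the normals alone, $\sum_\ell\log|Ga_\ell|$, which is proper because every point lies on at most $k+1<\tfrac{2n}3$ lines; no vertex terms are needed.
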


Let \(W\subset O(3)\) be a finite group generated by reflections. Its \emph{reflection arrangement} is the set of projective lines \(\PP(\Pi)\subset\RP^2\), where \(\Pi\subset\R^3\) runs over the reflecting planes of \(W\).     

\begin{maintheorem}\label{thm:balanced}
Every irreducible point-balanced arrangement is a reflection arrangement.
\end{maintheorem}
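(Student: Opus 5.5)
The plan is to reduce Theorem~\ref{thm:balanced} to a local-to-global statement about the chambers (faces) of \(\cA\). Lift \(\cA\) to an arrangement of great circles on \(S^2\). For a balanced collection of lines at angles \(\theta_1,\dots,\theta_m\) in \(T_v\RP^2\), the balancing condition is equivalent to \(\sum_j e^{2i\theta_j}=0\).

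\textbf{Step 1 (every face angle is at most \(\pi/2\)).} Suppose two cyclically consecutive lines at a vertex \(v\) made an angle \(>\pi/2\). Then all the \(\theta_j\) would lie in an open arc of length \(<\pi/2\) modulo \(\pi\). Hence all the numbers \(e^{2i\theta_j}\) would lie in an open half-plane, and their sum could not vanish. So every angle of every face of \(\cA\) is \(\le\pi/2\).

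\textbf{Step 2 (the arrangement is simplicial).} A face of \(\cA\) is a convex spherical \(n\)-gon. Its angle sum exceeds \((n-2)\pi\) by its area, which is positive. By Step~1 the angle sum is at most \(n\pi/2\), and \(n\pi/2>(n-2)\pi\) forces \(n<4\). So every face is a triangle. Irreducibility is used here to exclude the degenerate lunes of a near-pencil.

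\textbf{Step 3 (equal spacing at every vertex).} Show that at every vertex \(v\) with \(m_v=|\cA_v|\), consecutive lines meet at angle exactly \(\pi/m_v\). I expect this to be the main obstacle. For \(m_v=2,3\) it follows directly from balancing: two lines must be perpendicular, and for three lines the equation \(\sum e^{2i\theta_j}=0\) forces the three unit vectors to be cube roots of unity up to rotation. For \(m_v\ge4\), balancing alone does not give equal spacing, so a global argument is needed.

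\emph{First attempt.} Run an extremal argument. Pick a vertex and a face angle \(\alpha\) that is maximal relative to \(\pi/m_v\), i.e.\ one maximizing \(m_v\alpha\). Look at the triangle \(T\) containing this angle and at its other two vertices. Those vertices have their own balanced stars, and by Step~1 all their angles are \(\le\pi/2\). Transport angle information along the edges of \(T\) and use the triangle inequality for angles, \(\alpha+\beta+\gamma>\pi\). The goal is to show that if \(m_v\alpha>\pi\), then a neighbouring vertex has an even larger ratio, contradicting maximality. The symmetric minimal argument would then force \(m_v\alpha=\pi\) everywhere.

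\emph{Fallback.} Sum the Gauss--Bonnet identity over all triangles: the total area is \(2\pi\) for \(\RP^2\). Combine this with the combinatorial identities for simplicial arrangements. Also use the second-moment inequality for a balanced star, \(\sum_j \cos^2(\theta_j-\phi)=m/2\) for every direction \(\phi\), applied with \(\phi\) along an edge. Together these should show that the deviations from equal spacing cancel only when they vanish.

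\textbf{Step 4 (conclusion).} Once every vertex star is equally spaced, each triangle has angles \(\pi/p,\pi/q,\pi/r\) with \(1/p+1/q+1/r>1\), so it is a Coxeter triangle. Let \(W\) be the group generated by the reflections in the three planes spanned by its sides; \(W\) is a finite reflection group. Equal spacing means that reflection in a side of a face maps the star of each vertex on that side to itself. The usual developing (Poincaré polygon) argument then shows that \(W\) permutes the chambers of \(\cA\) simply transitively and preserves \(\cA\). Consequently the mirrors of \(W\) are exactly the lines of \(\cA\): every reflection in \(W\) is a conjugate \(w s w^{-1}\) of a generating reflection \(s\), and its mirror is \(w\) applied to a side of the base triangle, hence a line of \(\cA\). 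Conversely, every line of \(\cA\) is a wall of some chamber, and therefore the image under \(W\) of a side of the base triangle. Hence \(\cA\) is the reflection arrangement of \(W\).
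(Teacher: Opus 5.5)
Your Steps 1, 2 and 4 are sound. They correspond to Lemma~\ref{lem:balanced-pencils}(b), Proposition~\ref{prop:triangles}, and Lemma~\ref{lem:reflection-invariance} with Proposition~\ref{prop:coxeter}. One correction: irreducibility is not what excludes lunes in Step 2. Essentiality already does that, and near-pencils have triangular faces.

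\textbf{The gap is Step 3.} It is the actual content of the theorem, and you leave it as two plans that are never carried out. Neither plan can work as described. The only inputs you name are balance at each vertex and $\alpha+\beta+\gamma>\pi$ on each triangle. Neither involves irreducibility, and without irreducibility the conclusion is false.

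Take a line $L$ and four lines through its pole $x$ with consecutive angles $t,\frac\pi2-t,t,\frac\pi2-t$, where $t\ne\pi/4$. This near-pencil is point-balanced: the lines through $x$ form two orthogonal pairs, and the two lines at each point of $L$ are orthogonal. Its faces are triangles with angles $\pi/2$, $\pi/2$ and a sector angle at $x$. Yet $x$ is not equally spaced. The largest ratio $m_v\alpha$ is $4\max(t,\frac\pi2-t)>\pi$, attained at $x$. Every neighbour of $x$ is a double point with ratio $\pi$, so your step ``a neighbouring vertex has an even larger ratio'' fails.

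The fallback carries no information either. Summing Girard's formula over all faces gives $2\pi V-\pi f=2\pi$. This is just the Euler relation $f=2V-2$, which holds for every simplicial arrangement in $\RP^2$. The identity $\sum_j\cos^2(\theta_j-\phi)=m/2$ is the balance condition restated.

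What is missing is a mechanism that uses irreducibility and the geometry of great circles. The paper's argument runs as follows.
\begin{enumerate}[label=\textup{(\arabic*)}]
\item No face has two double vertices (Lemma~\ref{lem:tworight}). Otherwise the third vertex is the pole of the opposite side and the arrangement is a near-pencil.
\item At high vertices one has $\min_j g_j+\max_j g_j\le\pi/2$ (Lemma~\ref{lem:balanced-pencils}(c)). Combined with (1), this shows that every face has exactly one double vertex. Hence D-rays and N-rays alternate at non-double vertices.
\item The same estimates show $\pi/6<\delta\le\pi/4$ for the minimum face angle $\delta$. A face realising $\delta$ has one double and one triple vertex.
\item The key transport step is Lemma~\ref{lem:hypotenuse}. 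Reflection in an edge from a non-double vertex to a triple vertex swaps the two adjacent faces, so $\delta$ propagates across N-ray sides.
\item Alternation then chains this around the vertex, forcing $\delta=\pi/m(v)$ with $m(v)\in\{4,5\}$. A further argument handles the remaining $4$-fold vertices when $m(v)=5$.
\end{enumerate}
Without some such argument Step 3 is unproved, and Step 4 rests on it.
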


\begin{corollary}\label{cor:panov}
Suppose that \(\cA\) is a Hirzebruch arrangement. Then \(\cA\) is projectively equivalent to the reflection arrangement of \((A_1)^3\), \(A_3\), \(B_3\) or \(H_3\).
\end{corollary}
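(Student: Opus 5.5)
The corollary follows by combining Theorem~\ref{thm:correspondence} and Theorem~\ref{thm:balanced}, and then identifying the finitely many reflection arrangements that can occur.

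First I would check that a Hirzebruch arrangement \(\cA\) is automatically irreducible, so that Theorem~\ref{thm:correspondence} applies. Suppose \(\cA\) were a near-pencil with \(n=3k\) lines, \(n-1\) of them through a point \(p\). Then the remaining line \(L\) contains \(n-1\) vertices, while each line through \(p\) contains exactly \(2\) vertices: \(p\) and its intersection with \(L\). The Hirzebruch property forces \(k+1=2\) and \(k+1=n-1=3k-1\), which gives \(k=1\) and \(n=3\). Three lines in general position form a triangle, which is not a near-pencil in the sense that matters here. Either the paper's convention already excludes it, or the triangle is itself the \((A_1)^3\) arrangement: the three coordinate lines, with the Hirzebruch property for \(k=1\), since each line has \(2\) vertices. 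In that case the triangle is handled directly, because any three non-concurrent lines are projectively equivalent to the coordinate triangle. So either \(\cA\) is irreducible or it lies in the \((A_1)^3\) class. With irreducibility in hand, Theorem~\ref{thm:correspondence} gives \(F\in\GL\) such that \(F(\cA)\) is point-balanced. Projective equivalence preserves being a near-pencil, so \(F(\cA)\) is still irreducible. Theorem~\ref{thm:balanced} then shows that \(F(\cA)\) is the reflection arrangement of a finite reflection group \(W\subset O(3)\).

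The remaining step, which I expect to be the only substantive one, is to pin down \(W\) using the classification of finite reflection groups of rank at most \(3\). Because the arrangement is essential, the normals to the reflecting planes span \(\R^3\), so \(W\) has rank \(3\). By the Coxeter classification, \(W\) is one of the following:
\begin{itemize}
\item an irreducible group \(A_3\), \(B_3\) or \(H_3\), with \(6\), \(9\) and \(15\) planes respectively;
\item a product \(A_1\times I_2(m)\);
\item the product \((A_1)^3\).
\end{itemize}
For \(A_1\times I_2(m)\), the \(m\) planes of \(I_2(m)\) all contain a common axis, and the \(A_1\) plane is orthogonal to that axis. Projectively this is exactly a near-pencil: \(m\) concurrent lines plus one further line. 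Irreducibility therefore excludes it unless \(m=2\), in which case the arrangement is the \((A_1)^3\) triangle. The case \(I_2(m)\times A_1\) with \(m=1\) does not have rank \(3\).

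What remains are \((A_1)^3\), \(A_3\), \(B_3\) and \(H_3\), and these appear in the list as stated. As a consistency check, each satisfies the line count \(3k\) with \(k=1,2,3,5\). Undoing \(F\) shows that \(\cA\) is projectively equivalent to one of these four arrangements.
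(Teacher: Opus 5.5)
Your proposal is correct and follows the paper's proof: you treat $k=1$ directly, get irreducibility for $k\ge2$ (by counting vertices on a near-pencil, where the paper uses the multiplicity bound of Lemma~\ref{lem:stability}), apply Theorems~\ref{thm:correspondence} and~\ref{thm:balanced}, and rule out reducible $W$ because its arrangement would be a near-pencil (by listing $A_1\times I_2(m)$ and $(A_1)^3$, where the paper uses a direct invariant-subspace argument). Two small remarks: under the paper's definition three non-concurrent lines \emph{are} a near-pencil, so your hedge reduces to the paper's split into $k=1$ and $k\ge2$; and the final step ``undoing $F$'' tacitly uses that reflection groups of the same type are conjugate in $O(3)$, which the paper cites from \cite[Prop.~2.1]{Humphreys}.
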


The reflection arrangements of \(A_3\), \(B_3\) and \(H_3\) are shown in Figure~\ref{fig:arrangements}. They consist of the lines \(\PP(\Pi)\), where \(\Pi\) runs over the planes of symmetry of the regular tetrahedron, the cube (or octahedron) and the icosahedron (or dodecahedron), respectively, centred at the origin. Joining the centre of each face of the solid to its vertices and to the midpoints of its edges, and projecting radially onto \(S^2\), gives a triangulation of \(S^2\) by congruent spherical triangles with angles \(\pi/2,\pi/3,\pi/m\), where \(m=3,4,5\), respectively. These triangles are the fundamental domains of the reflection group, and their images in \(\RP^2\) are the faces of the arrangement.

\begin{figure}[ht]
\centering
\input{arrangements}

\caption{The reflection arrangements of \(A_3\), \(B_3\) and \(H_3\) under stereographic projection of a closed hemisphere of \(S^2\) onto the closed unit disc; identifying antipodal boundary points gives \(\RP^2\). Vertices on two lines are small dots, vertices on three lines are open circles, and vertices on four or five lines are red. The shaded face has angles \(\pi/2,\pi/3,\pi/m\) for \(m=3,4,5\). For \(B_3\) the boundary circle is itself a line of the arrangement.}
\label{fig:arrangements}
\end{figure}

\subsection{Strategy and relation to other work}\label{subsec:strategy}

\emph{Theorem~\ref{thm:correspondence}.} The main tool is the Kempf--Ness convexity argument. Proposition~\ref{prop:kempf-ness} gives \(F\in\SL\), unique up to left multiplication by \(SO(3)\), such that \(F(\cA)\) is balanced (Definition~\ref{def:balanced}). By the local-to-global balance identity \eqref{eq:balance-identity}, an irreducible arrangement of \(n\) lines is point-balanced if and only if it is balanced and every line contains \((n+3)/3\) vertices (Proposition~\ref{prop:balance-criterion}). This proves Theorem~\ref{thm:correspondence}. 

\emph{Theorem~\ref{thm:balanced}.} We use Girard's formula (Lemma~\ref{lem:girard}) and the angle estimates for balanced pencils in \(\R^2\) (Lemma~\ref{lem:balanced-pencils}) to show that every face of an irreducible point-balanced arrangement \(\cA\) is a triangle, exactly one of whose vertices is a double point of \(\cA\) (Propositions~\ref{prop:triangles} and~\ref{prop:oneright}). The main step is Proposition~\ref{prop:equal-spacing}: the lines through every vertex are equally spaced. Hence the reflection in each line of \(\cA\) preserves \(\cA\), and these reflections generate a finite reflection group whose reflection arrangement is \(\cA\) (Proposition~\ref{prop:coxeter}).

\emph{Corollary~\ref{cor:panov} and comparison with Panov.} Corollary~\ref{cor:panov} follows from Theorems~\ref{thm:correspondence} and~\ref{thm:balanced} and the classification of irreducible finite reflection groups of rank three \cite[\S2.4 and Thm.~2.7]{Humphreys}. Panov's Theorem~1.1 \cite{Panov} states that there are exactly four real Hirzebruch arrangements. This statement omits an equivalence relation: any three non-concurrent lines form a Hirzebruch arrangement, and there are infinitely many such triples. In our case, the projective equivalence in Corollary~\ref{cor:panov} comes from Kempf--Ness (Proposition~\ref{prop:kempf-ness}). Panov's proof relies on the existence and uniqueness of a polyhedral K\"ahler metric on \(\mathbb{CP}^2\) with cone singularities along the complexified arrangement, whereas the methods here are elementary and self-contained.

\subsection*{Acknowledgments}
I thank Dima Panov and Sasha Veselov. I developed the proof of
Theorem~\ref{thm:correspondence} during our collaboration on
\cite{dBPV}.

\subsection*{Use of AI}
I used Claude and ChatGPT to produce and understand some
of the arguments in this paper. The main ideas, including the
concept of point-balanced arrangements and the correspondence
with Hirzebruch arrangements, are my own.

\section{Background}\label{sec:background}

We recall the Veronese embeddings and the equivalent formulations of balance in \S\S\ref{subsec:veronese}--\ref{subsec:balanced-arr-P2}. We then give the Kempf--Ness convexity argument used in the proof of Theorem~\ref{thm:correspondence}, and the cell decomposition and Girard's spherical area formula used in the proof of Theorem~\ref{thm:balanced}.

\subsection{The Veronese embedding and tight frames}\label{subsec:veronese}

Let \(\operatorname{Sym}_0(\R^{d+1})\) denote the space of trace-free symmetric \((d+1)\times(d+1)\) matrices, with the inner product \(\ip{A}{B}=\tr(AB)\). For \(d\ge1\), the Veronese embedding of \(\RP^d\) in traceless form is
\begin{equation}\label{eq:veronese-general}
\nu_d:\RP^d\longrightarrow\operatorname{Sym}_0(\R^{d+1}),
\qquad
\nu_d([u])=uu^T-\frac1{d+1}\,I_{d+1},
\end{equation}
where \(u\) is a unit vector and \(I_k\) denotes the \(k\times k\) identity matrix. The image of \(\nu_d\) lies on the sphere of radius \(\sqrt{d/(d+1)}\) centred at the origin of \(\operatorname{Sym}_0(\R^{d+1})\).

Unit vectors \(u_1,\dots,u_m\in\R^{d+1}\) form a \emph{unit-norm tight frame} for \(\R^{d+1}\) if \(\sum_ju_ju_j^T=\frac m{d+1}\,I_{d+1}\); see \cite{Waldron}. Since
\begin{equation}\label{eq:veronese-frame}
\sum_{j=1}^m\nu_d([u_j])=\sum_{j=1}^mu_ju_j^T-\frac m{d+1}\,I_{d+1},
\end{equation}
pairwise non-proportional unit vectors \(u_1,\dots,u_m\in\R^{d+1}\) form a unit-norm tight frame for \(\R^{d+1}\) if and only if the barycentre of the points \(\nu_d([u_1]),\dots,\nu_d([u_m])\) is the origin.

\emph{The case \(d=1\)}.
We identify \(\R^2\) with \(\mathbb C\) by \((s,t)\mapsto s+it\), so that the unit vectors are the \(e^{i\theta}=(\cos\theta,\sin\theta)\) and \(\RP^1=S^1/\{\pm1\}\) consists of the points \([e^{i\theta}]\), \(\theta\in\R/\pi\Z\). For \(d=1\), \eqref{eq:veronese-general} is
\begin{equation}\label{eq:veronese-RP1}
\nu_1:\RP^1\longrightarrow\operatorname{Sym}_0(\R^2),
\qquad
\nu_1([u])=uu^T-\frac12I_2 .
\end{equation}
Put
\[
E_1=\begin{pmatrix}1&0\\0&-1\end{pmatrix},
\qquad
E_2=\begin{pmatrix}0&1\\1&0\end{pmatrix}.
\]
These are orthogonal, with \(\|E_1\|^2=\|E_2\|^2=2\), so \(E_1/\sqrt2,E_2/\sqrt2\) is an orthonormal basis of \(\operatorname{Sym}_0(\R^2)\). Expanding \(uu^T\) for \(u=e^{i\theta}\) gives
\begin{equation}\label{eq:veronese-RP1-angle}
\nu_1([e^{i\theta}])=\frac12\big(\cos2\theta\,E_1+\sin2\theta\,E_2\big).
\end{equation}
We identify \(\operatorname{Sym}_0(\R^2)\) with \(\R^2\) by means of the orthonormal basis \(E_1/\sqrt2,E_2/\sqrt2\), that is, \((sE_1+tE_2)/\sqrt2\mapsto(s,t)\), and then with \(\mathbb C\) by \((s,t)\mapsto s+it\). Under these identifications, \eqref{eq:veronese-RP1-angle} reads \(\nu_1([e^{i\theta}])=\frac1{\sqrt2}\,e^{2i\theta}\). Hence \(\nu_1\) maps \(\RP^1\) onto the circle of radius \(1/\sqrt2\) centred at the origin, and \(\sqrt2\,\nu_1\) is the identification
\begin{equation}\label{eq:directions-circle}
\RP^1\longrightarrow S^1,
\qquad
[e^{i\theta}]\longmapsto e^{2i\theta},
\end{equation}
induced by the squaring map on \(S^1\), which is constant on antipodal pairs. Hence, for \(\theta_1,\dots,\theta_m\in\R/\pi\Z\), \(\sum_je^{2i\theta_j}=0\) if and only if the barycentre of the points \(\nu_1([e^{i\theta_j}])\) is the origin.

\subsection{Pencils of lines in the Euclidean plane}\label{subsec:pencils}
Equip \(\R^2\) with the standard inner product.
In this subsection a \emph{line} in \(\R^2\) is a one-dimensional linear subspace. For a line \(\ell\subset\R^2\) let \(\mathrm{pr}_\ell\colon\R^2\to\R^2\) be the orthogonal projection onto \(\ell\).

\begin{definition}\label{def:balanced-pencil}
	Distinct lines \(\ell_1,\dots,\ell_m\subset\R^2\), \(m\ge2\), form a \emph{balanced pencil} if
	\[
	\sum_{j=1}^m\mathrm{pr}_{\ell_j}=\frac m2\,I_2 .
	\]
\end{definition}

\begin{definition}\label{def:direction}
Every line \(\ell\subset\R^2\) is uniquely of the form \(\ell=\R e^{i\theta}\) with \(\theta\in\R/\pi\Z\). We call \(\theta\) the \emph{direction} of \(\ell\); thus \(\ell\) is represented by the point \([e^{i\theta}]\) of \(\RP^1\).
\end{definition}

\begin{lemma}\label{lem:balance-characterisations}
Let \(\ell_1,\dots,\ell_m\), \(m\ge2\), be distinct lines in \(\R^2\), with directions \(\theta_j\), and let \(n_j\in\R^2\) be a unit vector orthogonal to \(\ell_j\). The following are equivalent:
\begin{enumerate}[label=\textup{(\roman*)}]
\item \(\ell_1,\dots,\ell_m\) form a balanced pencil;
\item the directions satisfy
\begin{equation}\label{eq:point-balance}
\sum_{j=1}^me^{2i\theta_j}=0 ;
\end{equation}
\item the barycentre of the points \(\nu_1(\ell_j)\) given by \eqref{eq:veronese-RP1} is the origin of \(\operatorname{Sym}_0(\R^2)\);
\item the vectors \(n_1,\dots,n_m\) form a unit-norm tight frame for \(\R^2\).
\end{enumerate}
\end{lemma}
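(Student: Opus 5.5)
The plan is to pass every condition through the single matrix identity $\mathrm{pr}_{\ell}=u u^T$, where $u=e^{i\theta}$ is a unit vector spanning $\ell$, and then use the expansion \eqref{eq:veronese-RP1-angle} already established.

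\emph{(i)$\Leftrightarrow$(iii).} Since $\mathrm{pr}_{\ell_j}=u_ju_j^T$, we have $\nu_1(\ell_j)=\mathrm{pr}_{\ell_j}-\tfrac12 I_2$. Summing gives
\[
\sum_{j=1}^m\mathrm{pr}_{\ell_j}-\frac m2 I_2=\sum_{j=1}^m\nu_1(\ell_j),
\]
which is the case $d=1$ of \eqref{eq:veronese-frame}. The left side vanishes exactly when the pencil is balanced, and the right side vanishes exactly when the barycentre of the $\nu_1(\ell_j)$ is the origin.

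\emph{(iii)$\Leftrightarrow$(ii).} By \eqref{eq:veronese-RP1-angle},
\[
\sum_j\nu_1(\ell_j)=\tfrac12\Big(\sum_j\cos2\theta_j\,E_1+\sum_j\sin2\theta_j\,E_2\Big).
\]
The matrices $E_1,E_2$ are linearly independent, being orthogonal and nonzero. So this sum vanishes if and only if both the real and imaginary parts of $\sum_je^{2i\theta_j}$ vanish, which is \eqref{eq:point-balance}.

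\emph{(iv)$\Leftrightarrow$(i).} Take $n_j=ie^{i\theta_j}$, a unit vector orthogonal to $\ell_j$; the other choice $-n_j$ gives the same $n_jn_j^T$. In $\R^2$ we have the identity $n_jn_j^T=I_2-u_ju_j^T$. Summing gives
\[
\sum_j n_jn_j^T=mI_2-\sum_j\mathrm{pr}_{\ell_j}.
\]
Hence $\sum_j n_jn_j^T=\tfrac m2 I_2$ holds if and only if $\sum_j\mathrm{pr}_{\ell_j}=\tfrac m2 I_2$. The first equation is the definition of a unit-norm tight frame with $d+1=2$. Alternatively, the direction of $n_j$ is $\theta_j+\pi/2$, so $e^{2i(\theta_j+\pi/2)}=-e^{2i\theta_j}$, and (iv)$\Leftrightarrow$(ii) follows from the (i)$\Leftrightarrow$(ii) argument applied to the perpendicular lines.

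There is no real obstacle here. The only points needing care are that the statement does not depend on the choice of representative unit vector or sign of $n_j$, since $uu^T$ is invariant under $u\mapsto -u$, and the bookkeeping of the factor $\tfrac12$.
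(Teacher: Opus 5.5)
Your proposal is correct and follows essentially the same route as the paper: write $\mathrm{pr}_{\ell_j}=u_ju_j^T=\tfrac12I_2+\nu_1(\ell_j)$, expand $\sum_j\nu_1(\ell_j)$ in the linearly independent matrices $E_1,E_2$ to obtain (i)$\Leftrightarrow$(ii)$\Leftrightarrow$(iii), and use $n_jn_j^T=I_2-u_ju_j^T$ for (iv)$\Leftrightarrow$(i). Your alternative argument for (iv), via the directions $\theta_j+\pi/2$ of the perpendicular lines, is also valid.
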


\begin{proof}
Put \(t_j=e^{i\theta_j}\), so that \(\ell_j=\R t_j\) and \(\mathrm{pr}_{\ell_j}=t_jt_j^T=\frac12I_2+\nu_1(\ell_j)\) by \eqref{eq:veronese-RP1}. Put \(Z=\sum_je^{2i\theta_j}\). By \eqref{eq:veronese-RP1-angle},
\[
\sum_{j=1}^m\mathrm{pr}_{\ell_j}=\frac m2\,I_2+\sum_{j=1}^m\nu_1(\ell_j)=\frac m2\,I_2+\frac12\big(\operatorname{Re}Z\,E_1+\operatorname{Im}Z\,E_2\big).
\]
Since \(E_1,E_2\) are linearly independent, (i) holds if and only if \(Z=0\), and if and only if \(\sum_j\nu_1(\ell_j)=0\). This proves that (i), (ii) and (iii) are equivalent. Since \(t_j,n_j\) is an orthonormal basis of \(\R^2\), we have \(n_jn_j^T=I_2-t_jt_j^T\). Therefore \(\sum n_jn_j^T = (m/2)I_2\) if and only if \(\sum \mathrm{pr}_{\ell_j} = (m/2) I_2\); showing that (iv) is equivalent to (i).
\end{proof}

\begin{remark}
Equation \eqref{eq:point-balance} is the criterion for a tight frame for \(\R^2\) in \cite[Exercise~2.9(a)]{Waldron}.	
\end{remark}

\begin{remark}\label{rmk:O2-invariance}
	An element of \(O(2)\) maps lines with directions \(\theta_j\) to lines with directions \(\varepsilon\theta_j+c\), where \(\varepsilon\in\{\pm1\}\), and therefore transforms \(\sum_je^{2i\theta_j}\) into either \(e^{2ic}\sum_je^{2i\theta_j}\) or \(e^{2ic}\,\overline{\sum_je^{2i\theta_j}}\). In particular, the modulus of \(Z:=\sum_je^{2i\theta_j}\) is invariant under \(O(2)\).
\end{remark}

\begin{definition}\label{def:equally-spaced}
Let \(\ell_1,\dots,\ell_m\subset\R^2\), \(m\ge2\), be distinct lines with directions represented by \(\theta_1<\theta_2<\dots<\theta_m<\theta_1+\pi\). The \emph{angles between consecutive lines} are
\[
g_j=\theta_{j+1}-\theta_j\quad(1\le j<m),\qquad g_m=\theta_1+\pi-\theta_m ;
\]
the lines divide \(\R^2\) into \(2m\) sectors, whose angles are the \(g_j\), each listed twice, and \(g_1+\dots+g_m=\pi\). The lines are \emph{equally spaced} if all \(g_j\) equal \(\pi/m\); then the \(2\theta_j\) are the vertices of a regular \(m\)-gon, so equally spaced lines form a balanced pencil by Lemma~\ref{lem:balance-characterisations}.	
\end{definition}

\subsection{Balanced arrangements in the projective plane}\label{subsec:balanced-arr-P2}

We use the round metric on \(\RP^2\) obtained as the quotient of the unit sphere \(S^2\subset\R^3\) by \(\pm1\): the distance between \([x]\) and \([y]\) is the angle in \([0,\pi/2]\) between the lines \(\R x\) and \(\R y\), projective lines are closed geodesics of length \(\pi\), and every isometry is induced by an element of \(O(3)\).

We now transfer the notions of \S\ref{subsec:pencils} from lines in \(\R^2\) to projective lines through a point of \(\RP^2\). Let \(v=[x]\), with \(x\) a unit vector. The differential at \(x\) of the quotient map \(S^2\to\RP^2\) identifies \(T_v\RP^2\) isometrically with \(x^\perp\). A projective line \(C\) through \(v\) is of the form \(C=\PP(\operatorname{span}(x,t))\) for a unit vector \(t\in x^\perp\), unique up to sign, and its tangent line at \(v\) is \(T_vC=\R t\). Choose an orthonormal basis \(a,b\) of \(x^\perp\) and let \(J_v\) be the \(3\times2\) matrix with columns \(a,b\). Then \(J_v\) maps \(\R^2\) isometrically onto \(x^\perp=T_v\RP^2\), with \(J_v^TJ_v=I_2\) and \(J_vJ_v^T=\mathrm{pr}_{x^\perp}=I_3-xx^T\), the orthogonal projection onto \(x^\perp\).

\begin{definition}\label{def:pencil-at-point}
For a projective line \(C\) through \(v\), let \(\ell_C=J_v^T(T_vC)\), a line in \(\R^2\). The \emph{direction} of \(C\) at \(v\), with respect to \(a,b\), is the direction of \(\ell_C\), that is, the \(\theta\in\R/\pi\Z\) with \(C=\PP\big(\operatorname{span}(x,\cos\theta\,a+\sin\theta\,b)\big)\). Distinct projective lines \(C_1,\dots,C_m\) through \(v\) form a \emph{balanced pencil at \(v\)} if \(\ell_{C_1},\dots,\ell_{C_m}\) form a balanced pencil in \(\R^2\) as in Definition~\ref{def:balanced-pencil}, and the \emph{angles between consecutive lines at \(v\)} are those of \(\ell_{C_1},\dots,\ell_{C_m}\).
\end{definition}

\begin{remark}
Another choice of \(a,b\) replaces \(J_v\) by \(J_vg\) with \(g\in O(2)\), which changes neither balanced pencils at \(v\) nor the angles between consecutive lines at \(v\).	
\end{remark}

Let \(\cA\) be an arrangement of \(n\) lines in \(\RP^2\). 
Recall that \(\cA_v\) is the set of lines of \(\cA\) through a vertex \(v\); its cardinality \(m(v)=|\cA_v|\) is the \emph{multiplicity} of \(v\). For \(C\in\cA\) choose a unit vector \(n_C\in\R^3\), unique up to sign, with \(C=\PP(n_C^\perp)\). The points \([n_C] \in \RP^2\) are the poles of the lines of \(\cA\); they form the \emph{dual point configuration} of \(\cA\).

\begin{lemma}\label{lem:vertex-balance}
Let \(v=[x]\) be a vertex of \(\cA\). The following are equivalent:
\begin{enumerate}[label=\textup{(\roman*)}]
\item \(\cA\) is balanced at \(v\) as in  Definition~\ref{def:pointbalanced};
\item the lines of \(\cA_v\) form a balanced pencil at \(v\) as in Definition~\ref{def:pencil-at-point};
\item \(\displaystyle\sum_{C\in\cA_v}n_Cn_C^T=\frac{m(v)}2\,\mathrm{pr}_{x^\perp}\).
\end{enumerate}
\end{lemma}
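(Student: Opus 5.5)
The plan is to transport everything through the isometry \(J_v\colon\R^2\to x^\perp=T_v\RP^2\), and then use Lemma~\ref{lem:balance-characterisations} for the equivalence involving normals.

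\emph{Step 1: (i)\(\Leftrightarrow\)(ii).} For \(C\in\cA_v\) write \(T_vC=\R t_C\) with \(t_C\in x^\perp\) a unit vector. Then \(\ell_C=\R\,J_v^Tt_C\), and \(J_v^Tt_C\) is a unit vector because \(J_vJ_v^Tt_C=\mathrm{pr}_{x^\perp}t_C=t_C\). Conjugating by \(J_v\) gives
\[
\mathrm{pr}_{T_vC}=J_v\,\mathrm{pr}_{\ell_C}\,J_v^T .
\]
Since \(J_v\) is an isometry onto \(V=x^\perp\), the operator identity \(\sum_C\mathrm{pr}_{T_vC}=\frac{m(v)}2I_V\) on \(V\) is equivalent to \(\sum_C\mathrm{pr}_{\ell_C}=\frac{m(v)}2 I_2\) on \(\R^2\). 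To see this, conjugate by \(J_v^T\) and \(J_v\), using \(J_v^TJ_v=I_2\), and note that \(J_vJ_v^T\) restricts to \(I_V\).

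We also need the \(\ell_C\) to be distinct. This holds because distinct projective lines through \(v\) have distinct tangent lines: \(C=\PP(\operatorname{span}(x,t_C))\) is determined by \(t_C\). So (i) and (ii) are equivalent.

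\emph{Step 2: (ii)\(\Leftrightarrow\)(iii).} For \(C\in\cA_v\), the pole \(n_C\) is orthogonal to both \(x\) and \(t_C\). Hence \(n_C\in x^\perp\), and \(x,t_C,n_C\) is an orthonormal basis of \(\R^3\). This gives
\[
n_Cn_C^T=\mathrm{pr}_{x^\perp}-t_Ct_C^T .
\]
Summing over \(\cA_v\) yields
\[
\sum_{C\in\cA_v} n_Cn_C^T=m(v)\,\mathrm{pr}_{x^\perp}-\sum_{C\in\cA_v}\mathrm{pr}_{T_vC},
\]
where each \(\mathrm{pr}_{T_vC}\) is viewed as a \(3\times3\) matrix vanishing on \(x\). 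Both sides of (iii) annihilate \(x\) and preserve \(x^\perp\), so (iii) holds exactly when
\[
\sum_{C\in\cA_v}\mathrm{pr}_{T_vC}=\frac{m(v)}2\,\mathrm{pr}_{x^\perp},
\]
which is (i).

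Alternatively, one can transport (iii) by \(J_v\) and apply Lemma~\ref{lem:balance-characterisations}(iv) to the unit normals \(J_v^Tn_C\) of \(\ell_C\).

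\emph{Main obstacle.} There is no real obstacle; the argument is bookkeeping. The only care needed is to regard the projections onto \(T_vC\subset V\) consistently as \(3\times3\) matrices supported on \(x^\perp\), so that \(I_V\) corresponds to \(\mathrm{pr}_{x^\perp}\).
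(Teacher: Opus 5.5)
Your proof is correct and follows the paper's argument: (i)\(\Leftrightarrow\)(ii) by transporting through the isometry \(J_v\), and (i)\(\Leftrightarrow\)(iii) via the identity \(n_Cn_C^T=\mathrm{pr}_{x^\perp}-t_Ct_C^T\), which is the paper's \(\mathrm{pr}_{T_vC}=I_{x^\perp}-n_Cn_C^T\) on \(x^\perp\). Your explicit check that the lines \(\ell_C\) are distinct is a detail the paper leaves implicit.
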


\begin{proof}
For \(C\in\cA_v\), the map \(J_v^T\) restricts to an isometry \(x^\perp\to\R^2\) carrying \(T_vC\) onto \(\ell_C\), so (i) and (ii) are equivalent. Since \(v\in C=\PP(n_C^\perp)\), the vector \(n_C\) lies in \(x^\perp\) and is a unit normal to the line \(T_vC\) in \(x^\perp\); hence \(\mathrm{pr}_{T_vC}=I_{x^\perp}-n_Cn_C^T\) on \(x^\perp\). Summing over \(C\in\cA_v\), and using that each \(n_Cn_C^T\) vanishes on \(\R x\), shows that (i) and (iii) are equivalent.
\end{proof}

For a vertex \(v=[x]\), with the directions \(\theta_C\) at \(v\) of the lines \(C\in\cA_v\) as in Definition~\ref{def:pencil-at-point}, put
\begin{equation}\label{eq:Zv}
Z_v=\sum_{C\in\cA_v}e^{2i\theta_C}.
\end{equation}
By Remark~\ref{rmk:O2-invariance}, \(|Z_v|\) does not depend on the choice of \(a,b\), and by Lemmas~\ref{lem:vertex-balance} and~\ref{lem:balance-characterisations}, \(\cA\) is point-balanced if and only if \(Z_v=0\) for every vertex \(v\).

\begin{definition}\label{def:balanced}
An arrangement \(\cA\) of \(n\) lines in \(\RP^2\) is \emph{balanced} if its unit normals form a unit-norm tight frame for \(\R^3\), that is,
\begin{equation}\label{eq:global-balance}
M=\frac n3\,I_3, \quad 	M=\sum_{C\in\cA}n_Cn_C^T .
\end{equation}
Equivalently, by \eqref{eq:veronese-frame}, if the barycentre of the points \(\nu_2([n_C]) \in \operatorname{Sym}_0(\R^3)\) is the origin.
\end{definition}

\subsection{The Kempf--Ness convexity argument}\label{subsec:kempf-ness}

We recall a particular instance of the standard Kempf--Ness criterion characterizing the existence of a zero of the moment map in terms of stability via the minimization of a convex function; see \cite[\S5.4, Theorem~5.19]{Szekelyhidi}.

\begin{proposition}\label{prop:kempf-ness}
Let \(\cA\) be an arrangement of \(n\ge4\) lines in \(\RP^2\) such that every vertex has multiplicity \(< \frac{2n}3\). Then there is \(F\in\SL\), unique up to left multiplication by \(SO(3)\), such that \(F(\cA)\) is balanced.
\end{proposition}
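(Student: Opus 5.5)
Our approach is the standard Kempf--Ness minimization of a convex log-norm functional on \(\SL/SO(3)\). For \(F\in\SL\), the line \(C=\PP(n_C^\perp)\) is mapped to \(F(C)=\PP\big((F^{-T}n_C)^\perp\big)\), so the unit normal of \(F(C)\) is \(F^{-T}n_C/|F^{-T}n_C|\). Write \(G=F^{-T}\) (again in \(\SL\)) and define
\[
\Phi(G)=\sum_{C\in\cA}\log|Gn_C|^2 .
\]
Since \(|kGn_C|=|Gn_C|\) for \(k\in SO(3)\), \(\Phi\) descends to \(SO(3)\backslash\SL\), which we identify with the positive definite matrices \(P=G^TG\) of determinant \(1\), so \(\Phi(P)=\sum_C\log(n_C^TPn_C)\).

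\textbf{Step 1: critical points are exactly the balanced arrangements.} Differentiate along \(G\mapsto e^{tX}G\) with \(X\) trace-free symmetric. Putting \(u_C=Gn_C/|Gn_C|\), we get
\[
\frac{d}{dt}\Big|_{t=0}\Phi=2\sum_C u_C^TXu_C=2\Big\langle X,\sum_C u_Cu_C^T\Big\rangle .
\]
This vanishes for every trace-free \(X\) if and only if \(\sum_Cu_Cu_C^T\) is a multiple of \(I_3\). Taking traces, the multiple is \(n/3\), which is exactly the balance condition \eqref{eq:global-balance} for \(F(\cA)\).

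\textbf{Step 2: convexity along geodesics.} The geodesics of \(SO(3)\backslash\SL\) are \(t\mapsto e^{tX}G\). After conjugating, we may take \(X=\operatorname{diag}(\lambda_1,\lambda_2,\lambda_3)\) with \(\sum\lambda_i=0\). Then
\[
\log|e^{tX}w|^2=\log\sum_i e^{2t\lambda_i}w_i^2
\]
is a log-sum-exp, hence convex in \(t\). It is strictly convex unless \(w\) lies in a single eigenspace of \(X\). If every \(w=Gn_C\) lay in a single eigenspace, then, since the \(n_C\) span \(\R^3\) (the arrangement is essential), the eigenvalues attached to the eigenspaces containing normals would have to cover everything, forcing \(\Phi\) to be linear along the geodesic. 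Rather than chase this case separately, we prove uniqueness in Step 4 by strict convexity combined with properness.

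\textbf{Step 3: properness, where the hypothesis is used; this is the main obstacle.} Along the geodesic above, with \(\lambda_1\ge\lambda_2\ge\lambda_3\), the slope at infinity is
\[
\lim_{t\to\infty}\frac{\Phi}{t}=2\sum_C\lambda_{i(C)},
\]
where \(i(C)\) is the smallest index with \((Gn_C)_i\ne0\). We will show this is \(>0\) for every nonzero \(X\); combined with convexity, this makes \(\Phi\) proper on \(SO(3)\backslash\SL\). Let \(E_1\) and \(E_1\oplus E_2\) be the flag of \(X\) (pulled back by \(G\)). Let \(a\) be the number of normals with \(i(C)\ge2\), i.e.\ lying in the plane \(E_2\oplus E_3\), and \(b\) the number with \(i(C)=3\), i.e.\ lying on the line \(E_3\). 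Then
\[
\sum_C\lambda_{i(C)}=n\lambda_1+a(\lambda_2-\lambda_1)+b(\lambda_3-\lambda_2).
\]
The normals lying in a plane are exactly the lines through a common point, so \(a\le\max(m(v),1)<2n/3\). At most one normal lies on a given line, so \(b\le1\). The expression is linear in \((\lambda_1-\lambda_2,\lambda_2-\lambda_3)\ge0\); using \(\lambda_1=(2\alpha+\beta)/3\), \(\lambda_2=(-\alpha+\beta)/3\), \(\lambda_3=(-\alpha-2\beta)/3\) with \(\alpha=\lambda_1-\lambda_2\), \(\beta=\lambda_2-\lambda_3\), it equals
\[
\alpha\Big(\frac{2n}{3}-a\Big)+\beta\Big(\frac{n}{3}-b\Big).
\]
Both coefficients are positive, the second since \(n\ge4\) gives \(n/3>1\ge b\). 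Hence the slope is positive for \((\alpha,\beta)\ne0\). Making the properness uniform (a compactness argument over unit-speed directions, using that the slope is bounded below by a positive constant via the finite set of possible \((a,b)\) values) is the technical heart of the argument. A minimizer exists, and by Step 1 it is balanced.

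\textbf{Step 4: uniqueness.} Suppose two balanced points exist, joined by a geodesic. Then \(\Phi\) is convex with two critical points along it, hence constant along it. This forces every \(Gn_C\) into a single eigenspace of \(X\), which makes the slope \(2\sum_C\lambda_{i(C)}\) equal to the derivative, namely zero. That contradicts Step 3. Therefore the minimizer \(P=G^TG\) is unique, i.e.\ \(F\) is unique up to left multiplication by \(SO(3)\).
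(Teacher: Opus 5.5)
Your framework is the paper's: the same functional (yours is twice the paper's \(\Phi\), with \(G=B\)), the same first-variation formula identifying critical points with balanced position, and the same slope computation \(\sum_C\lambda_{i(C)}=\alpha(\frac{2n}3-a)+\beta(\frac n3-b)\), which is the paper's \(s(X)\). The gap is in existence, which you assert rather than prove, and the mechanism you point to would not suffice.

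A uniform positive lower bound for the slope over unit directions does not give properness. The reason is that \(\Phi(e^{tX}G)=t\cdot(\text{slope})+O(1)\), and the \(O(1)\) term is not uniform in \(X\). It tends to \(-\infty\) as the eigenvectors of \(X\) approach a position where \(a\) or \(b\) jumps (e.g.\ at \(G=I_3\), as \([e_1]\) approaches a vertex). So you cannot interchange the limit \(t\to\infty\) with compactness of the sphere of directions.

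What is needed is pointwise positivity of \(s\) combined with convexity of \(t\mapsto\Phi(e^{tX})\), and this is how the paper argues. The sublevel set \(K=\{X:\Phi(e^X)\le\Phi(I_3)\}\) is closed and, by ray-convexity, star-shaped about \(0\). If it were unbounded, take \(X_j\in K\) with \(\|X_j\|\to\infty\). A limit \(X\) of \(X_j/\|X_j\|\) would then satisfy \(tX\in K\) for every \(t>0\), contradicting \(s(X)>0\). So \(K\) is compact, a minimiser exists, and by your Step~1 it is balanced.

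An alternative that also works: for each unit \(X\) choose \(R_X\) with \(\Phi(e^{R_XX})>\Phi(I_3)+1\), extend this to a neighbourhood of \(X\) by continuity, take a finite subcover, and use ray-convexity.

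Your uniqueness argument is correct and genuinely different from the paper's. The paper proves strict convexity directly. If every \(Bn_C\) lies in an eigenspace of \(X\), then either \(X\) has three one-dimensional eigenspaces and there are at most three normals, or all but at most one line pass through a point; both are excluded. You instead note that two critical points on a geodesic force every summand to be affine. Then \(\varphi'\) equals the asymptotic slope everywhere, so that slope is \(0\), contradicting Step~3.

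Your route is economical, since the single inequality \(s(X)>0\) serves both existence and uniqueness. However, it depends on the multiplicity hypothesis. The paper's version needs only \(n\ge4\) and that \(\cA\) is not a near-pencil. That is what lets it be reused verbatim in Proposition~\ref{prop:uniqueness} for irreducible arrangements that may have a vertex of multiplicity \(\ge\frac{2n}3\).
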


For \(F\in\SL\) put \(B=F^{-T}\). The unit normals of \(F(\cA)\) are the vectors \(y_C=Bn_C/|Bn_C|\), so \(F(\cA)\) is balanced if and only if
\begin{equation}\label{eq:moment}
\sum_{C\in\cA}y_Cy_C^T=\frac n3\,I_3 .
\end{equation}
Since \(F\mapsto F^{-T}\) is a bijection of \(\SL\) and \((UF)^{-T}=UF^{-T}\) for \(U\in SO(3)\), Proposition~\ref{prop:kempf-ness} states that there is \(B\in\SL\), unique up to left multiplication by \(SO(3)\), satisfying \eqref{eq:moment}. We prove this by the Kempf--Ness convexity argument on \(SO(3)\backslash\SL\). Put
\begin{equation}\label{eq:KN-functional}
\Phi(B)=\sum_{C\in\cA}\log|Bn_C|,\qquad B\in\SL ,
\end{equation}
which is invariant under \(B\mapsto UB\), \(U\in SO(3)\). Let \(X\ne0\) be a traceless symmetric matrix, with eigenvalues \(\lambda_1\ge\lambda_2\ge\lambda_3\) and orthonormal eigenvectors \(e_1,e_2,e_3\), and put \(\varphi(t)=\Phi(e^{tX}B)\). Then
\begin{equation}\label{eq:KN-geodesic}
2\varphi(t)=\sum_{C\in\cA}\log\Big(\sum_i\ip{Bn_C}{e_i}^2e^{2t\lambda_i}\Big),\qquad \varphi'(0)=\tr\Big(X\sum_{C\in\cA}y_Cy_C^T\Big).
\end{equation}

\begin{proof}[Proof of Proposition~\textup{\ref{prop:kempf-ness}}]
\emph{Convexity.} Each summand in \eqref{eq:KN-geodesic} is of log-sum-exp type. Its second derivative is the variance of the numbers \(2\lambda_i\) for the weights proportional to \(\ip{Bn_C}{e_i}^2e^{2t\lambda_i}\), so it is either positive for all \(t\) or identically zero. The second derivative is identically zero exactly when \(Bn_C\) lies in a single eigenspace of \(X\). Suppose that every \(Bn_C\) lies in an eigenspace of \(X\). Since \(X\ne0\) is traceless, it is not scalar. If \(X\) has three \(1\)-dimensional eigenspaces, there are at most three points \([Bn_C]\), contradicting \(n\ge4\). Otherwise \(X\) has a \(2\)-dimensional eigenspace \(E\), and all but at most one of the normals \(n_C\) lie in the plane \(B^{-1}E=x^\perp\), for some unit vector \(x\); that is, all but at most one of the lines of \(\cA\) pass through \([x]\), contradicting the hypothesis since \(n-1\ge\frac{2n}3\). Hence \(\varphi''>0\) everywhere: \(\Phi\) is strictly convex along \(t\mapsto e^{tX}B\).

\emph{Critical points.} By \eqref{eq:KN-geodesic}, \(\varphi'(0)=0\) for every traceless symmetric \(X\) if and only if \(\sum_Cy_Cy_C^T\) is scalar. Since \(\tr\sum_Cy_Cy_C^T=n\), a scalar matrix of this form equals \(\frac n3I_3\); this is \eqref{eq:moment}.

\emph{Uniqueness.} Let \(B_0,B_1\) satisfy \eqref{eq:moment}. By polar decomposition \(B_1B_0^{-1}=Ue^X\) with \(U\in SO(3)\) and \(X\) traceless symmetric. Since \eqref{eq:moment} is preserved by \(B\mapsto UB\), the matrix \(e^XB_0=U^{-1}B_1\) also satisfies \eqref{eq:moment}. If \(X\ne0\), the strictly convex function \(t\mapsto\Phi(e^{tX}B_0)\) would have vanishing derivative at \(t=0\) and \(t=1\), which is impossible. Hence \(X=0\) and \(B_1=UB_0\).

\emph{Existence.} By polar decomposition and \(SO(3)\)-invariance it suffices to minimise \(f(X)=\Phi(e^X)\) over traceless symmetric \(X\). Let \(X\ne0\) be traceless symmetric, with eigenvalues \(\lambda_1\ge\lambda_2\ge\lambda_3\) and orthonormal eigenvectors \(e_1,e_2,e_3\). Let \(n_2\) be the number of normals \(n_C\) in the plane \(e_1^\perp\), that is, the number of lines of \(\cA\) through \([e_1]\), and \(n_1\) the number on the line \(\R e_3\). As \(t\to+\infty\), each summand of \eqref{eq:KN-geodesic} with \(B=I_3\) is \(2t\lambda_i+O(1)\), where \(i\) is the least index with \(\ip{n_C}{e_i}\ne0\). Hence \(f(tX)=t\,s(X)+O(1)\), where, using \(\lambda_1+\lambda_2+\lambda_3=0\),
\[
s(X)=\lambda_1(n-n_2)+\lambda_2(n_2-n_1)+\lambda_3n_1
=(\lambda_1-\lambda_2)\Big(\frac{2n}3-n_2\Big)+(\lambda_2-\lambda_3)\Big(\frac n3-n_1\Big).
\]
Here \(n_2<\frac{2n}3\) by hypothesis and \(n_1\le1<\frac n3\), and \(\lambda_1>\lambda_3\), so \(s(X)>0\). 

The sublevel set \(K=\{X:f(X)\le f(0)\}\)
is closed. If \(X\in K\) and \(0\le s\le 1\), then
\[
f(sX)\le (1-s)f(0)+sf(X)\le f(0),
\]
since \(t\mapsto f(tX)\) is convex. Thus \(K\) is star-shaped about \(0\).
If \(K\) were unbounded, choose \(X_j\in K\) with \(\|X_j\|\to\infty\) and, after passing to a subsequence, \(X_j/\|X_j\|\to X\). For fixed \(t>0\) we have \(tX_j/\|X_j\|\in K\) for large \(j\), so \(tX\in K\) for every \(t>0\), contradicting \(s(X)>0\). Hence \(K\) is compact, \(f\) attains its minimum on \(K\), and this is a global minimum of \(\Phi\). A minimiser is a critical point, so it satisfies \eqref{eq:moment}.
\end{proof}

\begin{proposition}\label{prop:uniqueness}
Let \(\cA\) be an irreducible arrangement, and let \(F_0,F_1\in\GL\) be such that \(F_0(\cA)\) and \(F_1(\cA)\) are balanced \textup{(}Definition~\textup{\ref{def:balanced})}. Then \(F_1\in\R^\times O(3)\,F_0\). Conversely, \(F(\cA)\) is balanced for every \(F\in\R^\times O(3)\,F_0\).
\end{proposition}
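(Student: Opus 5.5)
The plan is to reduce to the uniqueness statement of Proposition~\ref{prop:kempf-ness} by normalising determinants, and to check the converse directly from the formula for unit normals.

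\emph{Converse.} Let \(F=cUF_0\) with \(c\in\R^\times\) and \(U\in O(3)\). Scalars act trivially on \(\RP^2\), so \(F(\cA)=U(F_0(\cA))\). The unit normals of \(U(\cA')\) are \(\pm Un_C'\), because \(U^{-T}=U\). Hence \(\sum (Un'_C)(Un'_C)^T=U\big(\tfrac n3 I_3\big)U^T=\tfrac n3 I_3\), and \(F(\cA)\) is balanced.

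\emph{Direct statement.} First I normalise. Replacing \(F_k\) by \(|\det F_k|^{-1/3}F_k\) does not change \(F_k(\cA)\). If \(\det F_k<0\), replace \(F_k\) by \(-F_k\); this also leaves \(F_k(\cA)\) unchanged. We may therefore assume \(F_0,F_1\in\SL\), since the adjustments only multiply by elements of \(\R^\times\).

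Next I check the hypotheses of Proposition~\ref{prop:kempf-ness}. We need \(n\ge4\) and every vertex of multiplicity \(<2n/3\). This is the step I expect to need care, since irreducibility alone does not obviously give these bounds. The way around it is to use balance itself rather than irreducibility. Suppose \(F_0(\cA)\) is balanced and \(v=[x]\) is a vertex of multiplicity \(m\). Then
\[
\frac n3=x^T\Big(\sum_C n_Cn_C^T\Big)x=\sum_{C\notin\cA_v}\ip{n_C}{x}^2\le n-m,
\]
so \(m\le 2n/3\). Equality would force \(\ip{n_C}{x}^2=1\) for every \(C\notin\cA_v\), so all those lines equal the polar line of \(v\). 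There would then be exactly one such line, giving \(n-m=1\) and \(n=3\).

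It remains to handle \(n\). An arrangement is essential, so \(n\ge3\). If \(n=3\), the three lines are non-concurrent, and \(F(\cA)\) is balanced exactly when the normals are orthonormal. Any two such \(F\) then differ by an element of \(\R^\times O(3)\) composed with something fixing the three lines of \(F_0(\cA)\) as a set. I would treat this case by hand. The linear maps preserving three coordinate lines are diagonal-times-permutation matrices, and \(F_1F_0^{-1}\) must send the orthonormal normal triple to another orthonormal triple up to scaling each vector. Directly computing, \(F_1^{-T}F_0^{T}\) maps an orthonormal basis to vectors proportional to an orthonormal basis. I would then need to rule out unequal scalings. Here it looks like uniqueness actually fails for \(n=3\): for example, \(\operatorname{diag}(1,2,3)\) preserves the coordinate triangle. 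So for \(n=3\) I would either note that the claim needs \(n\ge4\), or interpret it via irreducibility. A triangle counts as irreducible under the paper's definition, so I would flag this and expect the author implicitly assumes \(n\ge4\). In that case strict inequality \(m<2n/3\) holds at every vertex by the argument above.

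Finally, apply Proposition~\ref{prop:kempf-ness} to the arrangement \(\cA'=F_0(\cA)\), with \(G=F_1F_0^{-1}\in\SL\). Both the identity and \(G\) balance \(\cA'\), so uniqueness gives \(G\in SO(3)\). Hence \(F_1\in \R^\times O(3)F_0\) after undoing the normalisations.
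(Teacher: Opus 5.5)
Your argument is correct. The one real issue is the $n=3$ concern, which comes from a misreading: under the paper's definition a triangle is \emph{not} irreducible. If three lines are non-concurrent, then for any one of them the other two pass through a common point, so the arrangement is a near-pencil. An essential arrangement has at least three lines, so irreducibility already forces $n\ge4$. The paper uses this elsewhere: ``since $\cA$ is irreducible, $k\ge2$'' in the proof of Theorem~\ref{thm:correspondence}, and ``$n\ge4$, so $n\ge6$'' in Proposition~\ref{prop:balance-criterion}. Your example $\operatorname{diag}(1,2,3)$ is exactly why the hypothesis is there. Nothing needs to be flagged, and your proof is complete as it stands.

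Your route differs from the paper's in how Proposition~\ref{prop:kempf-ness} is used. Its hypothesis $m(v)<\frac{2n}3$ does not follow from irreducibility for a general arrangement. For example, four concurrent lines together with two lines avoiding their common point give $n=6$ and $m(v)=4$. So the paper does not invoke the proposition as stated. Instead it reopens the proof and notes that the uniqueness step needs only strict convexity of $t\mapsto\Phi(e^{tX}B)$. That in turn needs only $n\ge4$ and that no point lies on all but at most one line.

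You instead apply the proposition as a black box to $F_0(\cA)$. You derive its hypothesis from balance via $\frac n3=x^TMx\le n-m(v)$, with equality only for $n=3$; this is an elementary instance of ``a zero of the moment map implies stability''. It gives a little extra: once a balanced representative exists, irreducibility is needed only to exclude $n=3$. The same inequality shows that no near-pencil with $n\ge4$ is projectively equivalent to a balanced arrangement. The paper's route is shorter and gives strict convexity of $\Phi$ for every irreducible arrangement, whether or not a balanced representative exists. Yours avoids depending on the internals of the Kempf--Ness proof.
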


\begin{proof}
The converse is clear, since for \(U\in O(3)\) the matrix \(M\) of \(U(\cA)\) is \(UMU^T\). Replacing \(F_i\) by \((\det F_i)^{-1/3}F_i\), we may assume \(F_i\in\SL\). The uniqueness argument in the proof of Proposition~\ref{prop:kempf-ness} uses only the strict convexity of \(t\mapsto\Phi(e^{tX}B)\) for traceless symmetric \(X\ne0\), and the convexity argument there uses only that \(n\ge4\) and that no point of \(\RP^2\) lies on all but at most one of the lines of \(\cA\). Both hold for an essential irreducible arrangement. Hence \(F_1=UF_0\) with \(U\in SO(3)\).
\end{proof}

\begin{remark}\label{rem:GIT}
If \(\cA=\{C_1,\dots,C_n\}\) and
\(\widehat{n}=n_{C_1}\otimes\cdots\otimes n_{C_n}\), then
\[
\log |B^{\otimes n}\widehat{n}|^2
=2\sum_{C\in\cA}\log|Bn_C|
=2\Phi(B),
\]
so, up to the factor \(2\), \(\Phi\) is the usual Kempf--Ness function
for the Segre linearisation, with \(\SL\) acting diagonally on
\((\R^3)^{\otimes n}\). Its critical-point equation is
precisely \eqref{eq:moment}. If \(X\) has integer eigenvalues, then
\(s(X)\) is the Hilbert--Mumford weight of \(\widehat n\) for the
one-parameter subgroup \(\tau\mapsto\tau^X\). The hypothesis of
Proposition~\ref{prop:kempf-ness} is precisely the equal-weight
Hilbert--Mumford stability condition for the dual point configuration
of \(\cA\); see \cite[Theorem~11.2]{Dolgachev}.

The same problem of finding a linear transformation for which the
normalised vectors form a unit-norm tight frame is treated in
\cite[Lemma~2.4, Proposition~2.7 and Appendix~A.5]{ArtsteinAvidanKaplanSharir} by a different
convex minimisation. For equal weights their function is
\[
t\longmapsto
\log\det\Big(\sum_{C\in\cA} e^{t_C}n_Cn_C^T\Big)
-\frac{3}{n}\sum_{C\in\cA} t_C .
\]
If \(t\) is a minimiser and
\(Q(t)=\sum_C e^{t_C}n_Cn_C^T\), then \(Q(t)^{-1/2}\), followed by
normalisation of the resulting vectors, gives a unit-norm tight frame.
Thus the two convex functions give different formulations of the same
tight-frame problem.

There is also a geometric interpretation on the symmetric space
\(\SL/SO(3)\). For a unit vector \(u\in\R^3\), the function
\(b_{[u]}(g\,SO(3))=\log|g^{-1}u|\) is, for a suitably normalised
invariant metric, a Busemann function for the point of the ideal
boundary whose stabiliser in \(\SL\) is the stabiliser of the line
\(\R u\); see \cite[Example~2.2]{KapovichLeebMillson}. Thus
\(\Phi(B)=\sum_{C\in\cA}b_{[n_C]}(B^{-1}SO(3))\), and stability implies
that this sum of Busemann functions has a unique minimum; see
\cite[\S\S3.2--3.4]{KapovichLeebMillson}. If \(B\) minimises \(\Phi\),
the minimum is at \(B^{-1}SO(3)\), and left multiplication by \(B\)
moves it to \(SO(3)\), the point corresponding to the standard
Euclidean inner product on \(\R^3\).
\end{remark}

\subsection{Cell decomposition and spherical polygons}\label{subsec:cells}

In this subsection \(\cA\) is an arbitrary arrangement of lines in \(\RP^2\). For background on real projective line arrangements see \cite{GrunbaumAS}.

\begin{definition}\label{def:cells}
The \emph{edges} of \(\cA\) are the connected components of \(C\setminus\{\text{vertices}\}\), for \(C\in\cA\), and the \emph{faces} are the connected components of \(\RP^2\setminus\bigcup_{C\in\cA}C\).

Let \(F\) be a face and let \(v\in\overline F\) be a vertex. In a small disc around \(v\) meeting no line of \(\cA\) other than those through \(v\), the lines through \(v\) bound \(2m(v)\) \emph{sectors} at \(v\), each between two consecutive lines through \(v\); since \(v\in\overline F\), the face \(F\) contains at least one of them. The \emph{angle of \(F\) at \(v\)}, denoted \(\angle_vF\), is the sum of the angles of the sectors at \(v\) contained in \(F\).
\end{definition}

\begin{definition}\label{def:rays-neighbours}
A \emph{ray} at a vertex \(x\) is one of the \(2m(x)\) local directions in which a line of \(\cA\) leaves \(x\); each line through \(x\) gives two opposite rays. The first vertex encountered along a ray \(\rho\) is denoted \(y(\rho)\) and is called the \emph{neighbour} of \(x\) along \(\rho\). The closure of the edge leaving \(x\) along \(\rho\) is a segment from \(x\) to \(y(\rho)\), denoted \([x,y(\rho)]\).
\end{definition}

\begin{lemma}\label{lem:cells}
Let \(\cA\) be an arrangement of lines in \(\RP^2\). Then:
\begin{enumerate}[label=\textup{(\alph*)}]
\item The closure of every face is the image, under the quotient map \(S^2\to\RP^2\), of a compact convex spherical polygon contained in an open hemisphere. The restriction of the quotient map to this polygon is injective and locally isometric, and its sides map to closures of edges. If \(F\) is a face and \(v\in\overline F\) is a vertex, the interior angle of this polygon at \(v\) is \(\angle_vF\).
\item Near a vertex \(v\), the lines through \(v\) cut out \(2m(v)\) sectors whose angles are the angles between consecutive lines at \(v\), each occurring twice. These sectors lie in \(2m(v)\) distinct faces. In particular, if \(F\) is a face with \(v\in\overline F\), then \(F\) contains exactly one sector at \(v\), the \emph{sector of \(F\) at \(v\)}, whose angle is \(\angle_vF\).
\item Every edge lies in the closure of exactly two faces, one on each side.
\end{enumerate}
\end{lemma}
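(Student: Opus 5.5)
The plan is to lift everything to the double cover \(S^2\to\RP^2\) and work with the hyperplane arrangement of great circles \(\widetilde C=S^2\cap n_C^\perp\), \(C\in\cA\). The complement \(S^2\setminus\bigcup_C\widetilde C\) splits into chambers of the central plane arrangement \(\{n_C^\perp\}\) in \(\R^3\). Each chamber is an open polyhedral cone
\[
\{y:\varepsilon_C\ip{n_C}{y}>0\ \text{for all } C\in\cA\},
\]
for some sign vector \((\varepsilon_C)\). Because the arrangement is essential, the normals span \(\R^3\), so such a cone contains no line. A closed pointed polyhedral cone meets \(S^2\) in a compact convex spherical polygon lying in an open hemisphere. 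To see this, take \(w=\sum_C\varepsilon_Cn_C\). Then \(\ip{w}{y}>0\) on the closed cone minus \(0\): all terms are \(\ge0\), and they cannot all vanish, since the \(n_C\) span \(\R^3\).

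\textbf{Part (a).} The antipodal map sends each chamber to the chamber with opposite signs. A chamber and its antipode are disjoint, because the hemisphere \(\ip{w}{y}>0\) contains the first and not the second. Hence the quotient map is injective on each closed chamber polygon \(P\). It is also locally isometric, being a local isometry of \(S^2\). So \(\overline F\) is the image of \(P\), and faces of \(\cA\) correspond bijectively to antipodal pairs of chambers.

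The sides of \(P\) are arcs of great circles \(\widetilde C\), cut at the points where other great circles meet them. These are the preimages of vertices, so each side maps to the closure of an edge. The interior angle of \(P\) at a corner \(\tilde v\) is the angle of the local sector of \(P\) at \(\tilde v\). This equals \(\angle_vF\), once we know from (b) that \(F\) contains exactly one sector at \(v\).

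\textbf{Part (b).} Near \(\tilde v\) only the great circles through \(\tilde v\) matter. They form \(m(v)\) great circles through \(\tilde v\), which cut a small disc into \(2m(v)\) sectors. The angles of these sectors are the consecutive angles \(g_j\), each appearing twice, and they are measured in \(T_v\RP^2\cong x^\perp\) as in Definition~\ref{def:pencil-at-point}.

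Two distinct sectors at \(\tilde v\) differ in the sign of \(\ip{n_C}{\cdot}\) for some \(C\) through \(v\). So they lie in distinct chambers. None of these chambers is antipodal to another, since all of them contain points near \(\tilde v\) and so lie in a common open hemisphere. Therefore the \(2m(v)\) sectors lie in \(2m(v)\) distinct faces, and each face with \(v\in\overline F\) contains exactly one of them.

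\textbf{Part (c).} The argument is the same one dimension lower. A point in the interior of an edge lies on exactly one great circle \(\widetilde C\). Crossing it flips only \(\varepsilon_C\), which gives two chambers, one on each side. They are not antipodal, by the same hemisphere argument, so they give two distinct faces.

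\textbf{Main obstacle.} The delicate step is the injectivity claim: showing that sectors or chambers that look distinct do not become identified in \(\RP^2\). The device above handles it uniformly. Any set of chambers that share a nearby point lies in a common open hemisphere, and so contains no antipodal pair. The rest is standard convexity of sign-vector cones.
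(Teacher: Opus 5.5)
Your proposal is correct and follows essentially the paper's route. You lift to the sign-vector chambers of the central plane arrangement, use $w=\sum_C\varepsilon_Cn_C$ to place each closed chamber in an open hemisphere, and derive from this single fact the injectivity of the quotient map, the one-sector-per-face property at a vertex, and (c). You prove (b) first and then read off the angle in (a), while the paper gets the single sector from convexity of $\overline P$ and $-x\notin\overline P$, but this is only a reordering.

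One small repair is needed. The phrase ``contain points near $\tilde v$ and so lie in a common open hemisphere'' is not justified as written. Argue instead that if $P$ and $-P$ both had $\tilde v$ in their closures, then $\overline P$ would contain $\pm\tilde v$, contradicting $\overline P\subset\{\ip{w}{\cdot}>0\}$. This is exactly the paper's observation.
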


\begin{proof}
For \(C\in\cA\) let \(\Pi_C\subset\R^3\) be the plane with \(C=\PP(\Pi_C)\), and let \(n_C\) be a unit normal to \(\Pi_C\). The great circles \(\Pi_C\cap S^2\) form an arrangement on \(S^2\). Each of its chambers, that is, each connected component of \(S^2\setminus\bigcup_C\Pi_C\), is \(S^2\cap\operatorname{int}K\), with closure \(S^2\cap K\), where
\[
K=\{y\in\R^3:\varepsilon_C\ip{n_C}{y}\ge0\ \text{for all }C\in\cA\}
\]
is the polyhedral cone obtained by choosing one closed half-space, with sign \(\varepsilon_C\in\{\pm1\}\), bounded by each plane \(\Pi_C\). Put \(z=\sum_C\varepsilon_Cn_C\). For \(y\in K\setminus\{0\}\), every term of \(\ip{z}{y}=\sum_C\varepsilon_C\ip{n_C}{y}\) is nonnegative, and at least one is positive because the normals span \(\R^3\), \(\cA\) being essential. Hence \(S^2\cap K\) is a compact convex spherical polygon contained in the open hemisphere \(\{\ip{z}{\cdot}>0\}\).

The quotient map \(q\colon S^2\to\RP^2\) is a local isometry and is injective on every subset of an open hemisphere. For a face \(F\), the preimage \(q^{-1}(F)\) is the union of two antipodal chambers \(\pm P\); the map \(q\) carries \(P\) homeomorphically onto \(F\) and, being closed, carries \(\overline P=S^2\cap K\) onto \(\overline F\), and the sides of \(\overline P\) lie on the great circles \(\Pi_C\cap S^2\) and map to closures of edges. Now let \(x\in\overline P\) with \(q(x)=v\) a vertex. Since \(\overline P\) lies in an open hemisphere, \(-x\notin\overline P\); hence, for a small disc \(D\) around \(v\), the set \(F\cap D\) is the image of \(P\cap D_x\), where \(D_x\) is the component of \(q^{-1}(D)\) containing \(x\). As \(\overline P\) is a convex spherical polygon, \(P\cap D_x\) is a single sector at \(x\), whose angle is the interior angle of \(\overline P\) at \(x\). Thus \(F\) contains exactly one sector at \(v\), and its angle is the interior angle at \(v\); this proves (a). Since every sector at \(v\) lies in some face, (b) follows. The same argument at an interior point of an edge, where a small disc is divided by a single line into two half-discs, proves (c).
\end{proof}

We use Girard's formula for spherical polygons; see for example \cite[Chapter~2]{Ratcliffe}. It is the special case of the Gauss--Bonnet theorem for a geodesic polygon \(P\) on the unit sphere.

\begin{lemma}[Girard]\label{lem:girard}
Let \(P\subset S^2\) be a convex spherical polygon with \(p\) vertices and interior angles \(\gamma_1,\dots,\gamma_p\). Then
\[
\operatorname{Area}(P)=\sum_{i=1}^p\gamma_i-(p-2)\pi .
\]
\end{lemma}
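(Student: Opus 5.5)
The plan is the classical proof by lunes, done first for triangles and then extended to convex polygons by triangulating from an interior point.

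\emph{Lunes.} Two distinct great circles through antipodal points $\pm x$ cut $S^2$ into four lunes. A lune with interior angle $\alpha$ is the image of the region $\{0\le\phi\le\alpha\}$ in spherical coordinates with poles $\pm x$. Area on $S^2$ is invariant under rotation about the polar axis, and the full sphere of area $4\pi$ corresponds to $\phi\in[0,2\pi)$. It follows that the lune has area $2\alpha$; one can also see this directly by integrating $\sin\vartheta\,d\vartheta\,d\phi$.

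\emph{Triangles.} Let $T$ be a convex spherical triangle with vertices $A,B,C$ and angles $\alpha,\beta,\gamma$. Since $T$ is convex and lies in an open hemisphere, each angle lies in $(0,\pi)$, and the three sides extend to three distinct great circles.
\begin{itemize}
\item At $A$, the two great circles through $A$ bound a lune $L_A$ of angle $\alpha$ containing $T$. Its antipodal lune $-L_A$ contains $-T$. Define $L_B,L_C$ and $-L_B,-L_C$ in the same way.
\item The key claim is that the six lunes $\pm L_A,\pm L_B,\pm L_C$ cover $S^2$, with every point of $T\cup(-T)$ covered exactly three times and every other point covered exactly once, up to sets of measure zero.
\item To prove the claim, note that the three great circles cut $S^2$ into eight triangles, namely $\pm T$ and three antipodal pairs adjacent to $T$ along its sides. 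One checks that each adjacent triangle lies in exactly one of the six lunes. For example, the triangle across side $BC$ lies in $L_A$ but not in $\pm L_B$ or $\pm L_C$.
\item Summing areas gives $2(2\alpha+2\beta+2\gamma)=4\pi+4\operatorname{Area}(T)$. Hence $\operatorname{Area}(T)=\alpha+\beta+\gamma-\pi$.
\end{itemize}

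\emph{Polygons.} Let $P$ be a convex polygon with vertices $v_1,\dots,v_p$ in cyclic order. Since $P$ lies in an open hemisphere, we can choose a point $o$ in its interior. The geodesic segments $[o,v_i]$ lie in $P$ by convexity, and they cut $P$ into $p$ convex triangles $o\,v_i\,v_{i+1}$, which overlap only along these segments.
\begin{itemize}
\item At $o$, the angles of the $p$ triangles sum to $2\pi$.
\item At each $v_i$, the two triangles meeting there split the interior angle, so their angles at $v_i$ add up to $\gamma_i$.
\item Applying the triangle case to each piece and summing gives
\[
\operatorname{Area}(P)=\sum_{i=1}^p\gamma_i+2\pi-p\pi=\sum_{i=1}^p\gamma_i-(p-2)\pi .
\]
\end{itemize}

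\emph{Main obstacle.} The only delicate step is the covering count in the triangle case, namely which of the eight triangles lie in which lunes. I would handle it with signs. Write the three great circles as $\ip{n_j}{\cdot}=0$, with $T=\{\ip{n_j}{y}\ge0\ \text{for all } j\}$. Then each lune is given by fixing the signs of two of the three linear forms, and each of the eight triangles corresponds to a sign vector in $\{\pm1\}^3$. The claim then reduces to counting, for each sign vector, how many of the six lunes contain it:
\begin{itemize}
\item the sign vectors $(+,+,+)$ and $(-,-,-)$, corresponding to $\pm T$, agree in two coordinates with three of the six lunes;
\item every other sign vector agrees in two coordinates with exactly one of the six lunes.
\end{itemize}
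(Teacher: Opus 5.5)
Your proof is correct, but it takes a different route from the paper, which gives no proof of Lemma~\ref{lem:girard} of its own: it cites \cite[Chapter~2]{Ratcliffe} and identifies the formula as the Gauss--Bonnet theorem for a geodesic polygon on the unit sphere. In that route you take $K\equiv 1$, sides with zero geodesic curvature, exterior angles $\pi-\gamma_i$ and $\chi(P)=1$. This gives $\operatorname{Area}(P)+\sum_i(\pi-\gamma_i)=2\pi$ in one line, with no triangulation and no lune bookkeeping; convexity is needed only to know that $P$ is a disc bounded by geodesic arcs. The price is the full Gauss--Bonnet theorem for piecewise-smooth boundaries. Your argument uses only rotation invariance of area (for the lune), a pointwise counting identity, and additivity over a fan triangulation. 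It is therefore fully elementary, which fits the paper's stated aim of being self-contained. Your sign-vector count is also well suited to this paper: it is the same description of chamber closures as $S^2\cap K$ with sign choices $\varepsilon_C$ used in the proof of Lemma~\ref{lem:cells}. Likewise, the open-hemisphere hypothesis you rely on is exactly what Lemma~\ref{lem:cells}(a) supplies in the application. One small wording fix: the interior point $o$ exists because $P$ is a nondegenerate polygon, not because $P$ lies in an open hemisphere. The hemisphere condition is what makes the segments $[o,v_i]$ well defined and contained in $P$. Each triangle $o\,v_i\,v_{i+1}$ is nondegenerate because $o$, being interior, lies off the great circle of every side.
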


\section{Proof of Theorem~\ref{thm:correspondence}}\label{sec:proof-A}

\subsection{The local-to-global balance identity}\label{subsec:balance-identity}

Let \(\cA\) be an arrangement of \(n\) lines, with unit normals \(n_C\) for \(C\in\cA\). For \(C\in\cA\) let \(r_C\) be the number of vertices of \(\cA\) on \(C\). Recall from \S\ref{subsec:balanced-arr-P2} that \(M=\sum_{C\in\cA}n_Cn_C^T\) and that for a vertex \(v\), \(Z_v=\sum_{C\in\cA_v}e^{2i\theta_C}\), where \(\theta_C\) is the direction of \(C\) at \(v\). Thus \(\cA\) is balanced if and only if \(M=\frac n3I_3\), and balanced at \(v\) if and only if \(Z_v=0\).

\begin{proposition}\label{prop:balance-identity}
For every arrangement \(\cA\) of \(n\) lines in \(\RP^2\),
\begin{equation}\label{eq:balance-identity}
\sum_v|Z_v|^2=2\,\Big\|M-\frac n3\,I_3\Big\|^2+\sum_{C\in\cA}\Big(r_C-\frac{n+3}3\Big),
\end{equation}
where \(v\) runs over the vertices of \(\cA\).
\end{proposition}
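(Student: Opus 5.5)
The plan is to expand both sides of \eqref{eq:balance-identity} as sums over pairs of lines and compare them term by term. The key fact is that every pair of distinct lines \(C\ne D\) of \(\cA\) meets in exactly one point, and that point is a vertex. So a double sum over vertices \(v\) and ordered pairs of distinct lines in \(\cA_v\) is the same as a sum over all ordered pairs of distinct lines of \(\cA\).

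\emph{Left-hand side.} Fix a vertex \(v=[x]\) and expand
\[
|Z_v|^2=\sum_{C,D\in\cA_v}e^{2i(\theta_C-\theta_D)}=m(v)+\sum_{\substack{C,D\in\cA_v\\ C\ne D}}\cos 2(\theta_C-\theta_D),
\]
where the imaginary parts cancel in conjugate pairs. Next, express \(\cos2(\theta_C-\theta_D)\) through the normals. As in the proof of Lemma~\ref{lem:vertex-balance}, \(n_C\) and \(n_D\) lie in the plane \(x^\perp\) and are orthogonal there to the tangent directions \(T_vC\) and \(T_vD\). Rotating by \(\pi/2\) inside \(x^\perp\), the angle between \(n_C\) and \(n_D\) therefore equals \(\theta_C-\theta_D\) modulo \(\pi\). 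Hence
\[
\cos2(\theta_C-\theta_D)=2\cos^2(\theta_C-\theta_D)-1=2\ip{n_C}{n_D}^2-1,
\]
which is independent of the sign choices for the normals. Now sum over \(v\), using \(\sum_v m(v)=\sum_C r_C\) (count incident vertex–line pairs) and the key fact above:
\[
\sum_v|Z_v|^2=\sum_C r_C+2\sum_{C\ne D}\ip{n_C}{n_D}^2-n(n-1).
\]

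\emph{Right-hand side.} Since \(\tr M=n\),
\[
\Big\|M-\frac n3I_3\Big\|^2=\tr M^2-\frac{2n}3\tr M+\frac{n^2}3=\sum_{C,D}\ip{n_C}{n_D}^2-\frac{n^2}3 .
\]
The diagonal terms contribute \(n\), so this equals \(n+\sum_{C\ne D}\ip{n_C}{n_D}^2-\frac{n^2}3\). Substituting into the right-hand side of \eqref{eq:balance-identity} gives
\[
2\sum_{C\ne D}\ip{n_C}{n_D}^2+2n-\frac{2n^2}3+\sum_Cr_C-\frac{n(n+3)}3
=2\sum_{C\ne D}\ip{n_C}{n_D}^2+\sum_Cr_C+n-n^2 ,
\]
which agrees with the left-hand side.

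The only step that needs care is identifying \(\cos2(\theta_C-\theta_D)\) with \(2\ip{n_C}{n_D}^2-1\), including its independence of the sign choices. Everything else is bookkeeping.
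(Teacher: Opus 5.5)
Your proof is correct and takes essentially the same approach as the paper. Both use $\ip{n_C}{n_D}^2=\cos^2(\theta_C-\theta_D)$ at each vertex, the fact that each pair of distinct lines meets at exactly one vertex, the counts $\sum_v m(v)=\sum_C r_C$ and $n(n-1)$ ordered pairs, and $\tr M^2=\|M-\frac n3 I_3\|^2+\frac{n^2}{3}$; the only difference is that you expand the two sides separately rather than solving the expression for $\tr M^2$ for $\sum_v|Z_v|^2$.
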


\begin{proof}
Let \(v=[x]\) be a vertex and \(C,D\in\cA_v\), with directions \(\theta_C,\theta_D\) with respect to an orthonormal basis \(a,b\) of \(x^\perp\). Up to sign, \(n_C\) and \(n_D\) are the images of the tangent vectors \(\cos\theta_C\,a+\sin\theta_C\,b\) and \(\cos\theta_D\,a+\sin\theta_D\,b\) under the same rotation of \(x^\perp\) through \(\pi/2\). Therefore \(\ip{n_C}{n_D}^2=\cos^2(\theta_C-\theta_D)=\frac12\big(1+\cos2(\theta_C-\theta_D)\big)\). Summing over \(C,D\in\cA_v\) gives
\[
\sum_{C,D\in\cA_v}\ip{n_C}{n_D}^2=\frac12\big(m(v)^2+|Z_v|^2\big).
\]
Since every pair of distinct lines meets in exactly one vertex,
\[
\tr M^2=\sum_{C,D\in\cA}\ip{n_C}{n_D}^2=n+\sum_v\Big(\frac{m(v)^2+|Z_v|^2}2-m(v)\Big).
\]
Counting pairs of lines and incidences gives \(\sum_vm(v)\big(m(v)-1\big)=n(n-1)\) and \(\sum_vm(v)=\sum_Cr_C\). Since \(\tr M=n\), we have \(\tr M^2=\|M-\frac n3I_3\|^2+\frac{n^2}3\). Substituting,
\[
\sum_v|Z_v|^2=2\tr M^2-2n-n(n-1)+\sum_Cr_C=2\Big\|M-\frac n3I_3\Big\|^2-\frac{n(n+3)}3+\sum_Cr_C,
\]
which is \eqref{eq:balance-identity}.
\end{proof}

\begin{remark}
For unit vectors \(u_1,\dots,u_m\in\R^d\), Welch's inequality states that \(\sum_{j,k}\ip{u_j}{u_k}^2\ge m^2/d\), with equality precisely when the vectors form a unit-norm tight frame; see \cite[\S6.1 and Theorem~6.1]{Waldron}. The differences between the two sides of this inequality for all the normals and for those through a vertex are, respectively,
\begin{equation}\label{eq:welch-differences}
	\sum_{C,D\in\cA}\ip{n_C}{n_D}^2-\frac{n^2}{3}
	=\Big\|M-\frac n3I_3\Big\|^2, \quad
	\sum_{C,D\in\cA_v}\ip{n_C}{n_D}^2-\frac{m(v)^2}{2}
	=\frac{|Z_v|^2}{2}.
\end{equation}
Here the local inequality is applied in \(x^\perp\), where \(v=[x]\). Equation~\eqref{eq:balance-identity} relates the local and global differences in \eqref{eq:welch-differences} through the incidence numbers \(r_C\).	
\end{remark}

\begin{lemma}\label{lem:eigen}
Let \(\cA\) be a point-balanced arrangement of \(n\) lines. Then
\[
Mn_C=\tfrac12\,(n+1-r_C)\,n_C\qquad\text{for every }C\in\cA .
\]
\end{lemma}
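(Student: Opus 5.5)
Fix \(C\in\cA\) and compute \(Mn_C=\sum_{D\in\cA}\ip{n_D}{n_C}\,n_D\) by sorting the lines \(D\) according to the vertex where they meet \(C\). The term \(D=C\) contributes \(n_C\). Every other line \(D\) meets \(C\) in exactly one vertex \(v\in C\), and \(D\in\cA_v\). So
\[
Mn_C=n_C+\sum_{v\in C}\ \sum_{D\in\cA_v\setminus\{C\}}n_Dn_D^T\,n_C ,
\]
where \(v\) runs over the \(r_C\) vertices on \(C\).

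At each such vertex \(v=[x]\) we use the local balance, Lemma~\ref{lem:vertex-balance}(iii): \(\sum_{D\in\cA_v}n_Dn_D^T=\frac{m(v)}2\,\mathrm{pr}_{x^\perp}\). Since \(v\in C\), we have \(n_C\perp x\), so \(\mathrm{pr}_{x^\perp}n_C=n_C\). Removing the term \(D=C\) then gives
\[
\sum_{D\in\cA_v\setminus\{C\}}n_Dn_D^Tn_C=\Big(\frac{m(v)}2-1\Big)n_C .
\]

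Summing over \(v\in C\) gives \(Mn_C=\big(1+\sum_{v\in C}(\frac{m(v)}2-1)\big)n_C\). The lines other than \(C\) are partitioned by their intersection point with \(C\), so \(\sum_{v\in C}(m(v)-1)=n-1\). Hence
\[
\sum_{v\in C}\Big(\frac{m(v)}2-1\Big)=\frac{n-1}{2}-\frac{r_C}{2}.
\]
Therefore \(Mn_C=\big(1+\frac{n-1-r_C}2\big)n_C=\frac12(n+1-r_C)n_C\), as claimed.

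None of the steps is hard. The only points needing care are that \(n_C\in x^\perp\) for every vertex \(v=[x]\) on \(C\), and that the pencils \(\cA_v\setminus\{C\}\) for \(v\in C\) partition \(\cA\setminus\{C\}\).
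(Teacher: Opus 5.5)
Your proof is correct and follows essentially the same route as the paper: both split the lines other than \(C\) according to their intersection vertex on \(C\), apply Lemma~\ref{lem:vertex-balance}(iii) at each vertex using \(n_C\perp x_i\), and use \(\sum_i(m_i-1)=n-1\). The only difference is bookkeeping: you remove \(C\) from each pencil, whereas the paper writes \(\sum_i\sum_{D\in\cA_{v_i}}n_Dn_D^T=M+(r-1)\,n_Cn_C^T\).
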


\begin{proof}
Let \(v_1=[x_1],\dots,v_r=[x_r]\) be the vertices on \(C\), with \(r=r_C\) and \(m_i=m(v_i)\). Every line other than \(C\) meets \(C\) in exactly one of the \(v_i\), so \(\sum_i(m_i-1)=n-1\) and
\[
\sum_{i=1}^r\sum_{D\in\cA_{v_i}}n_Dn_D^T=M+(r-1)\,n_Cn_C^T .
\]
Apply both sides to \(n_C\). Since \(n_C\perp x_i\), Lemma~\ref{lem:vertex-balance}(iii) at \(v_i\) gives 
\[
Mn_C+(r-1)n_C=\sum_i\frac{m_i}2\,n_C=\frac{n-1+r}2\,n_C
\]
as we wanted to show.
\end{proof}

\begin{proposition}\label{prop:balance-criterion}
Let \(\cA\) be an irreducible arrangement of \(n\) lines. Then \(\cA\) is point-balanced if and only if \(\cA\) is balanced and
\begin{equation}\label{eq:hirzebruch-count}
r_C=\frac{n+3}3\qquad(C\in\cA).
\end{equation}
In particular, an irreducible point-balanced arrangement has \(3\mid n\) and \(n\ge6\).
\end{proposition}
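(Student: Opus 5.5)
The argument is essentially a rearrangement of the identity \eqref{eq:balance-identity}. The left-hand side $\sum_v|Z_v|^2$ and the term $2\|M-\frac n3I_3\|^2$ are nonnegative. The remaining term $\sum_C\big(r_C-\frac{n+3}3\big)$ has no sign a priori. To handle it, I will use Lemma~\ref{lem:eigen} in one direction and a direct substitution in the other.

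For the implication ``balanced and \eqref{eq:hirzebruch-count} $\Rightarrow$ point-balanced'', I substitute $M=\frac n3I_3$ and $r_C=\frac{n+3}3$ into \eqref{eq:balance-identity}. The right-hand side vanishes, so $\sum_v|Z_v|^2=0$. Hence $Z_v=0$ at every vertex, and by the discussion after \eqref{eq:Zv} this means $\cA$ is point-balanced. Irreducibility is not needed for this direction.

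For the converse, suppose $\cA$ is point-balanced. Lemma~\ref{lem:eigen} shows that every unit normal $n_C$ is an eigenvector of $M$ with eigenvalue $\mu_C=\frac12(n+1-r_C)$. I then want to show that $M$ is scalar. This is where irreducibility enters, and it is the main step.

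\begin{itemize}
\item Suppose $M$ is not scalar. Then $M$ has at least two distinct eigenvalues, and its eigenspaces are either three lines, or a plane and an orthogonal line.
\item Each $n_C$ lies in one eigenspace. In the first case there are at most three lines in $\cA$. In the second case all but at most one normal lies in a plane $x^\perp$, so all but at most one line of $\cA$ pass through $[x]$.
\item The second case makes $\cA$ a near-pencil or non-essential, contradicting the hypotheses. In the first case, an essential arrangement of at most three lines is exactly three non-concurrent lines, with normals along the three eigendirections.
\end{itemize}

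The three-line case needs separate treatment. Three non-concurrent lines give $r_C=2$ for each $C$. I then check directly whether such a triangle can be point-balanced: at each of the three double points the two lines must meet at a right angle, since two lines form a balanced pencil only when orthogonal. Then the normals are orthonormal and $M=I_3=\frac n3I_3$, so $M$ is scalar after all and the triangle is consistent with the claim. (Whether the paper counts three lines as irreducible is immaterial, because in this case the conclusion holds anyway.)

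Once $M=\lambda I_3$, taking traces gives $\lambda=\frac n3$. Then $\mu_C=\frac n3$ for every $C$, which rearranges to $r_C=n+1-\frac{2n}3=\frac{n+3}3$. This is \eqref{eq:hirzebruch-count}.

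For the final assertion, $r_C=\frac{n+3}3$ must be an integer, so $3\mid n$. It remains to exclude $n=3$, presumably because three lines form a near-pencil under the paper's definition: all lines but one pass through a common point. So $n\ge6$.
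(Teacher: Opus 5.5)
Your proof is correct and follows the paper's argument. The forward direction substitutes into \eqref{eq:balance-identity}, and the converse uses Lemma~\ref{lem:eigen} together with the eigenspace analysis of $M$ and $\tr M=n$ to obtain \eqref{eq:hirzebruch-count}. Your separate treatment of three lines is harmless but unnecessary: three non-concurrent lines already form a near-pencil in the paper's sense, so irreducibility gives $n\ge4$, which the paper uses both to exclude the three-eigenline case and to conclude $n\ge6$.
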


\begin{proof}
If \eqref{eq:global-balance} and \eqref{eq:hirzebruch-count} hold, the right-hand side of \eqref{eq:balance-identity} vanishes, so every \(Z_v=0\) and \(\cA\) is point-balanced. This implication does not use irreducibility.

Conversely, let \(\cA\) be point-balanced. By Lemma~\ref{lem:eigen}, every \(n_C\) is an eigenvector of the symmetric matrix \(M\). If \(M\) were not scalar, its eigenspaces would be three orthogonal lines, so that \(n\le3\), or a plane \(W\) and the line \(W^\perp\), so that all lines of \(\cA\) but at most one pass through the point \([W^\perp]\). Both are excluded because \(\cA\) is essential and irreducible. So \(M\) is scalar, hence \(M=\frac n3I_3\) since \(\tr M=n\), and Lemma~\ref{lem:eigen} gives \(\frac12(n+1-r_C)=\frac n3\), which is \eqref{eq:hirzebruch-count}. Finally \(r_C\) is an integer, so \(3\mid n\), and \(n\ge4\), so \(n\ge6\).
\end{proof}

\subsection{Hirzebruch arrangements and point-balanced position}\label{subsec:proof-A}

\begin{lemma}\label{lem:stability}
Let \(\cA\) be a Hirzebruch arrangement of \(n=3k\) lines with \(k\ge2\). Then every point of \(\RP^2\) lies on at most \(k+1<\frac{2n}3\) lines of \(\cA\). Moreover, \(\cA\) is irreducible.
\end{lemma}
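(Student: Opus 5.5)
The plan is a counting argument on a single line of \(\cA\) through the point in question.

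\emph{Step 1: the multiplicity bound.} Let \(p\in\RP^2\) be a point and set \(m=\) the number of lines of \(\cA\) through \(p\). If \(m\le1\) there is nothing to prove, so assume \(m\ge2\); then \(p\) is a vertex. Pick a line \(C\in\cA\) through \(p\). Every other line of \(\cA\) meets \(C\) in exactly one vertex, so, as in the proof of Lemma~\ref{lem:eigen},
\[
\sum_{v\in C}\big(m(v)-1\big)=n-1=3k-1,
\]
where the sum runs over the \(r_C=k+1\) vertices on \(C\). Each term is at least \(1\). Isolating the term for \(v=p\) gives
\[
m-1\le 3k-1-k=2k-1,
\]
so \(m\le 2k\). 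This is not yet the bound \(k+1\), so a counting argument on a single line is insufficient.

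\emph{Step 2: the refined bound.} To sharpen it, I would count on a line \(D\in\cA\) \emph{not} through \(p\). Such a \(D\) exists because \(\cA\) is essential. The \(m\) lines through \(p\) meet \(D\) in pairwise distinct points, since two of them already meet at \(p\notin D\). Each of these intersection points lies on two lines of \(\cA\), so it is a vertex on \(D\). Hence \(m\le r_D=k+1\).

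Finally, \(k+1<2k=\frac{2n}3\) because \(k\ge2\).

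\emph{Step 3: irreducibility.} Suppose \(\cA\) were a near-pencil. Then \(n-1=3k-1\) lines pass through a common point. By Step 2 this forces \(3k-1\le k+1\), i.e.\ \(k\le1\), which contradicts \(k\ge2\). So \(\cA\) is irreducible.

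There is no real obstacle here. The only point needing care is the choice of a line avoiding \(p\) in Step 2, and essentiality supplies it.
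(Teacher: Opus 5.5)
Your Steps 2 and 3 are correct and are exactly the paper's argument. Essentiality gives a line of \(\cA\) avoiding the point, the \(m\) lines through the point meet it in \(m\) distinct vertices, so \(m\le k+1<2k\), and a near-pencil would have a point on \(3k-1>k+1\) lines. Step 1 is correct but superfluous, and you can drop it.
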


\begin{proof}
Let \(v\in\RP^2\) lie on \(m\ge2\) lines of \(\cA\). Since \(\cA\) is essential, there is \(C'\in\cA\) with \(v\notin C'\). The \(m\) lines through \(v\) meet \(C'\) in \(m\) distinct vertices, and \(C'\) contains exactly \(k+1\) vertices, so \(m\le k+1<2k=\frac{2n}3\). A near-pencil of \(3k\) lines has a vertex of multiplicity \(3k-1>k+1\), so \(\cA\) is irreducible.
\end{proof}

\begin{proof}[Proof of Theorem~\textup{\ref{thm:correspondence}}]
Let \(\cA\) be an irreducible arrangement of \(n\) lines. 

\emph{(i)\(\Rightarrow\)(ii).} Suppose that \(\cA\) has the Hirzebruch property, and write \(n=3k\). Since \(\cA\) is irreducible, \(k\ge2\). Lemma~\ref{lem:stability} and Proposition~\ref{prop:kempf-ness} give \(F\in\SL\) such that \(F(\cA)\) is balanced. Every line of \(F(\cA)\) contains \(k+1=\frac{n+3}3\) vertices, so \(F(\cA)\) is point-balanced by Proposition~\ref{prop:balance-criterion}.

\emph{(ii)\(\Rightarrow\)(i).} Suppose that \(F(\cA)\) is point-balanced for some \(F\in\GL\). Proposition~\ref{prop:balance-criterion} gives \(3\mid n\) and shows that every line of \(F(\cA)\) contains \(\frac{n+3}3\) vertices. The same holds for \(\cA\), which therefore has the Hirzebruch property.
\end{proof}

\subsection{Uniqueness of point-balanced position}\label{subsec:uniqueness}

\begin{corollary}\label{cor:uniqueness-hirzebruch}
Let \(\cA\) be an irreducible Hirzebruch arrangement, and let \(F_0\in\GL\) be such that \(F_0(\cA)\) is point-balanced. Then \(F(\cA)\) is point-balanced if and only if \(F\in\R^\times O(3)\,F_0\). Equivalently, there is a Euclidean inner product on \(\R^3\), unique up to scale, for whose round metric \(\cA\) is point-balanced.
\end{corollary}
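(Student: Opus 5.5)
The corollary follows by combining Proposition~\ref{prop:balance-criterion} with Proposition~\ref{prop:uniqueness}. The main point is that, for irreducible arrangements, point-balanced is the same as balanced once the vertex counts are fixed, and these counts are projective invariants.

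First, let \(F\in\GL\). The arrangement \(F(\cA)\) is again irreducible, since \(F\) maps a near-pencil to a near-pencil and preserves essentiality. It has the same incidence structure as \(\cA\), so every line of \(F(\cA)\) contains \(r_C=k+1=\frac{n+3}3\) vertices, where \(n=3k\). By Proposition~\ref{prop:balance-criterion}, \(F(\cA)\) is therefore point-balanced if and only if it is balanced in the sense of Definition~\ref{def:balanced}.

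Second, apply this at \(F_0\). Since \(F_0(\cA)\) is point-balanced, it is balanced. Proposition~\ref{prop:uniqueness} then says that \(F(\cA)\) is balanced exactly when \(F\in\R^\times O(3)\,F_0\). Combining with the first step proves the main equivalence.

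For the reformulation, a Euclidean inner product on \(\R^3\) has the form \(\langle x,y\rangle_G=\langle Gx,Gy\rangle\) for some \(G\in\GL\), and \(G\) is determined up to left multiplication by \(O(3)\). Its round metric is the pullback of the standard round metric under the map \([x]\mapsto[Gx]\). Hence \(\cA\) is point-balanced for \(\langle\cdot,\cdot\rangle_G\) if and only if \(G(\cA)\) is point-balanced for the standard metric. Note that scaling \(G\) by \(\lambda\) scales the inner product by \(\lambda^2\) but leaves the round metric of curvature \(1\) unchanged, since that metric depends only on the angles between lines. By the first part, the admissible \(G\) form exactly the coset \(\R^\times O(3)F_0\). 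The corresponding inner products are \(\lambda^2\langle F_0x,F_0y\rangle\), which is the unique inner product up to scale. Existence of \(F_0\) for a Hirzebruch arrangement is Theorem~\ref{thm:correspondence}.

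None of these steps is a real obstacle. The only points needing care are, first, checking that irreducibility and the counts \(r_C\) pass to \(F(\cA)\), so that Proposition~\ref{prop:balance-criterion} applies in both directions. Second, one must match the action of \(\R^\times O(3)\) on inner products: \(O(3)\) acts on the left of \(G\), and the scalar factor changes only the scale of the inner product, not the round metric.
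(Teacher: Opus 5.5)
Your proof is correct and follows essentially the same route as the paper. Proposition~\ref{prop:balance-criterion} turns point-balanced into balanced for the irreducible arrangements \(F_0(\cA)\) and \(F(\cA)\), Proposition~\ref{prop:uniqueness} gives the coset \(\R^\times O(3)\,F_0\), and the inner-product reformulation is the same pullback argument. The only cosmetic difference is that the paper gets the ``if'' direction directly from \(\R^\times O(3)\) acting on \(\RP^2\) by isometries, whereas you route it through the converse in Proposition~\ref{prop:uniqueness} and the count \(r_C=\frac{n+3}3\).
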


\begin{proof}
One direction is clear, since \(O(3)\) acts on \(\RP^2\) by isometries. Conversely, if \(F(\cA)\) is point-balanced, then \(F_0(\cA)\) and \(F(\cA)\) are irreducible and point-balanced, hence balanced by Proposition~\ref{prop:balance-criterion}, and Proposition~\ref{prop:uniqueness} gives \(F\in\R^\times O(3)\,F_0\).

The arrangement \(\cA\) is point-balanced for the round metric of an inner product \(\beta\) on \(\R^3\) if and only if \(F(\cA)\) is point-balanced for any \(F\in\GL\) with \(\beta=\ip{F\cdot}{F\cdot}\). The inner products \(\ip{F\cdot}{F\cdot}\) with \(F\in\R^\times O(3)\,F_0\) are exactly the positive multiples of \(\ip{F_0\cdot}{F_0\cdot}\). Such an \(F_0\) exists by Theorem~\ref{thm:correspondence}.
\end{proof}

\section{Proof of Theorem~\ref{thm:balanced}}\label{sec:proof-B}

For the geometric argument in \S\S\ref{subsec:simpliciality}--\ref{subsec:equal-spacing}, fix an irreducible point-balanced arrangement \(\cA\) in \(\RP^2\). A vertex \(v\) is \emph{double}, \emph{triple} or \emph{high} if \(m(v)=2\), \(m(v)=3\) or \(m(v)\ge4\), respectively. We use Lemma~\ref{lem:cells} without further reference. In particular, the closure of a face \(F\) is the injective, locally isometric image of a convex spherical polygon, whose interior angle at a vertex \(v\) is \(\angle_vF\); the angles \(\angle_vF\) of the faces having \(v\) as a vertex are exactly the angles between consecutive lines at \(v\); and every edge lies in the closures of exactly two faces.

\subsection{Angle estimates and simpliciality}\label{subsec:simpliciality}

\begin{lemma}\label{lem:balanced-pencils}
Let \(\ell_1,\dots,\ell_m\subset\R^2\), \(m\ge2\), form a balanced pencil, and let \(g_1,\dots,g_m\) be the angles between consecutive lines.
\begin{enumerate}[label=\textup{(\alph*)}]
\item If \(m=2\), then the lines are orthogonal. If \(m=3\), then \(g_1=g_2=g_3=\pi/3\).
\item Every \(g_j\le\pi/2\), with equality only for \(m=2\).
\item If \(m\ge4\), then \(\min_jg_j+\max_jg_j\le\frac\pi2\).
\item If \(m=4\), the four lines form two orthogonal pairs. In cyclic order their angles are \(\alpha,\ \frac\pi2-\alpha,\ \alpha,\ \frac\pi2-\alpha\).
\end{enumerate}
\end{lemma}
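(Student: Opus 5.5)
Throughout I will use criterion (ii) of Lemma~\ref{lem:balance-characterisations}: the pencil is balanced iff \(\sum_j e^{2i\theta_j}=0\). So the points \(w_j=e^{2i\theta_j}\) are \(m\) distinct points on the unit circle with barycentre at the origin. Consecutive points \(w_j,w_{j+1}\) are separated by the arc \(2g_j\), and these arcs sum to \(2\pi\).

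\emph{Parts (a) and (d).} For \(m=2\), \(w_1+w_2=0\) forces \(w_2=-w_1\), so \(2g_1=\pi\) and the lines are orthogonal. For \(m=3\), three unit vectors summing to zero form an equilateral triangle: \(|w_1+w_2|=|w_3|=1\) gives \(\cos(2g_1)=-1/2\), and similarly for the other gaps, so every \(2g_j=2\pi/3\). For \(m=4\), four unit vectors with zero sum form a rhombus inscribed in a circle. Concretely, \(|w_1+w_2|=|w_3+w_4|\) together with the analogous equalities forces the points to pair into antipodal pairs: \(w_1+w_2+w_3+w_4=0\) with all \(|w_j|=1\) implies \(\{w_3,w_4\}=\{-w_1,-w_2\}\), which is the standard parallelogram argument. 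Antipodal \(w\)'s are orthogonal lines. In cyclic order on the circle the points are \(w_1,w_2,-w_1,-w_2\), giving gaps \(2\alpha,\pi-2\alpha,2\alpha,\pi-2\alpha\), hence line angles \(\alpha,\frac\pi2-\alpha,\alpha,\frac\pi2-\alpha\).

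\emph{Part (b).} If some \(g_j>\pi/2\), the arc between \(w_j\) and \(w_{j+1}\) exceeds \(\pi\) and contains no other \(w\). Then all \(w_k\) lie in an open half-circle, so \(\operatorname{Re}(\bar u\sum w_k)>0\) for the unit vector \(u\) at the midpoint of the complementary arc. This contradicts balance. If \(g_j=\pi/2\), all points lie in a closed half-circle and only the endpoints can contribute zero. Taking \(u\) orthogonal to the diameter, every term has \(\operatorname{Re}(\bar u w_k)\ge0\), with equality only at the two endpoints \(w_j=-w_{j+1}\). So there are no other points, and \(m=2\).

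\emph{Part (c).} This is the step I expect to be the main obstacle. Let \(g_{\max}=g_j\), attained at the arc from \(w_j\) to \(w_{j+1}\) of length \(2g_{\max}\), and write \(\delta=2g_{\min}\). Rotate so this arc is symmetric about \(-1\), i.e. \(w_j=e^{i(\pi-g_{\max})}\) and \(w_{j+1}=e^{-i(\pi-g_{\max})}\). All other points lie on the arc \(|\phi|\le\pi-g_{\max}\), and in fact strictly inside it at distance at least \(\delta\) from both endpoints. Take real parts:
\[
0=\sum\cos\phi_k=2\cos(\pi-g_{\max})+\sum_{\text{others}}\cos\phi_k .
\]
Here \(2\cos(\pi-g_{\max})=-2\cos g_{\max}\), and there are \(m-2\ge2\) other points, each with \(|\phi_k|\le\pi-g_{\max}-\delta\).

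To prove \(g_{\min}+g_{\max}\le\pi/2\), suppose instead \(g_{\max}+\delta/2>\pi/2\). The plan is to bound \(\sum_{\text{others}}\cos\phi_k\) from below. Each such term satisfies \(\cos\phi_k\ge\cos(\pi-g_{\max}-\delta)=-\cos(g_{\max}+\delta)\), which is positive under the assumption when \(g_{\max}+\delta>\pi/2\). That alone does not suffice, so I would use the full spacing: the interior points are separated by at least \(\delta\), and at least two of them must sit near the ends to keep the sum small. The extremal configuration places the two interior points adjacent to the endpoints at angles \(\pm(\pi-g_{\max}-\delta)\). Pairing them with \(w_j,w_{j+1}\) gives the contribution \(2\cos(\pi-g_{\max})+2\cos(\pi-g_{\max}-\delta)\), and any further points add cosines exceeding these. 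I would then show this sum is positive exactly when \(2g_{\max}+\delta>\pi\), via the sum-to-product formula \(\cos A+\cos B=2\cos\frac{A+B}2\cos\frac{A-B}2\). I also have to handle points on the far side where the cosine is large, which only increases the sum.

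If this convexity-and-pairing argument has gaps, I would fall back to ordering the points by angle \(\phi\) and pairing symmetric ones, using the identity \(\cos a+\cos b=2\cos\frac{a+b}2\cos\frac{a-b}2\) repeatedly.
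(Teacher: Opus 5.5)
Your proposal is correct and is essentially the paper's argument. You pass to the points $w_j=e^{2i\theta_j}$, confine them to the closed arc complementary to a largest gap, and take real parts after centring that arc. In (c) you then use that the $m-2\ge2$ remaining points lie at arc distance at least $2g_{\min}$ from both endpoints.

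In (c) you were too pessimistic about the pointwise bound. Under the hypothesis $g_{\max}+g_{\min}>\pi/2$, the bound $\cos\phi_k\ge-\cos(g_{\max}+2g_{\min})>0$ already suffices. Combined with the endpoint terms it gives $\sum_k\cos\phi_k\ge-2\cos g_{\max}-2\cos(g_{\max}+2g_{\min})=-4\cos(g_{\max}+g_{\min})\cos g_{\min}>0$, using $g_{\min}\le\pi/4$. So you do not need the spacing of the interior points among themselves or the ``extremal configuration''. The paper reaches the same contradiction by comparing the endpoint terms with $-\sin g_{\min}$ and the interior terms with $+\sin g_{\min}$.
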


The three-line case in (a) and the decomposition into two orthogonal pairs in (d) are the planar tight-frame statements of \cite[Exercise~2.9(c),(d)]{Waldron}. We include elementary proofs together with the angle estimates.

\begin{proof}
Under the identification \eqref{eq:directions-circle}, set \(z_j=e^{2i\theta_j}\in S^1\). By Lemma~\ref{lem:balance-characterisations}, \eqref{eq:point-balance} holds, that is, \(\sum_jz_j=0\), and the angle \(g_j\) corresponds to the arc of length \(2g_j\) from \(z_j\) to \(z_{j+1}\), with indices taken cyclically.

(a) If \(m=2\), then \(z_1=-z_2\), so the two points are antipodal and the lines are orthogonal. If \(m=3\), three unit complex numbers with sum zero are the vertices of an equilateral triangle, so the three arcs between consecutive points all have length \(2\pi/3\), and \(g_1=g_2=g_3=\pi/3\).

For (b) and (c), let \(\beta=g_j\). The open arc of length \(2\beta\) between \(z_j\) and \(z_{j+1}\) (indices cyclically) contains no \(z_i\), so all the points lie on the complementary closed arc, of length \(W=2\pi-2\beta\), whose endpoints are these two consecutive points. Rotate so that this arc is centred at \(1\), and write \(z_i=e^{i\varphi_i}\) with \(|\varphi_i|\le W/2\). Then \(\sum_i\cos\varphi_i=\operatorname{Re}\sum_iz_i=0\).

(b) If \(\beta\ge\pi/2\), then \(W\le\pi\), so every \(\cos\varphi_i\ge0\), and since their sum is zero all of them vanish. Hence \(W=\pi\) and every \(z_i\) is \(\pm i\), an endpoint of the arc. The \(z_i\) are distinct, so \(m=2\).

(c) Let \(\delta=\min_ig_i\le\pi/4\), and suppose \(\beta>\pi/2-\delta\), so \(W<\pi+2\delta\). The two endpoints have real part \(\cos(W/2)>-\sin\delta\). Every other point is at arc distance at least \(2\delta\) from both endpoints, so \(|\varphi_i|\le W/2-2\delta<\pi/2-\delta\) and \(\cos\varphi_i>\sin\delta\). There are \(m-2\ge2\) of them, so \(\sum_i\cos\varphi_i>-2\sin\delta+(m-2)\sin\delta\ge0\), a contradiction. Hence every \(g_j\le\pi/2-\delta\).

(d) A pair of unit complex numbers \(\{w,w'\}\) is determined, up to order, by \(s=w+w'\) when \(s\ne0\): they are the two points of the unit circle on the perpendicular bisector of \(0\) and \(s\). If \(z_1+z_2=0\), then also \(z_3+z_4=0\). Otherwise \(\{z_3,z_4\}\) and \(\{-z_1,-z_2\}\) have the same sum \(-(z_1+z_2)\ne0\), so they coincide. In either case the \(z_j\) form two antipodal pairs, that is, the lines form two orthogonal pairs. The direction orthogonal to \(\theta_1\) is \(\theta_1+\pi/2\). It is not \(\theta_2\), since then \(\theta_3,\theta_4\in(\theta_1+\frac\pi2,\theta_1+\pi)\) could not differ by \(\pi/2\); likewise it is not \(\theta_4\). So \(\theta_3=\theta_1+\pi/2\) and \(\theta_4=\theta_2+\pi/2\), and the angles are \(\alpha,\frac\pi2-\alpha,\alpha,\frac\pi2-\alpha\) with \(\alpha=\theta_2-\theta_1\).
\end{proof}

By Definition~\ref{def:pencil-at-point} and Lemma~\ref{lem:vertex-balance}, Lemma~\ref{lem:balanced-pencils} applies to the lines of \(\cA\) through each vertex.

\begin{proposition}\label{prop:triangles}
Every face of \(\cA\) is a triangle. If \(F\) is a face and \(v\in\overline F\) is a vertex, then \(\angle_vF\le\pi/2\), with equality only if \(v\) is double. If \(v\) is double, every face having \(v\) as a vertex has angle \(\pi/2\) at \(v\), while if \(v\) is triple, every such face has angle \(\pi/3\) at \(v\).
\end{proposition}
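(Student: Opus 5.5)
The angle statements come straight from Lemma~\ref{lem:balanced-pencils} applied at each vertex. Then I will show every face is a triangle using an area count with Girard's formula (Lemma~\ref{lem:girard}) and Euler's formula.

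\emph{Angles.} By Lemma~\ref{lem:cells}(b), if \(v\in\overline F\) then \(\angle_vF\) is one of the angles \(g_j\) between consecutive lines at \(v\). By Lemma~\ref{lem:vertex-balance}, these lines form a balanced pencil. Lemma~\ref{lem:balanced-pencils}(b) then gives \(\angle_vF\le\pi/2\), with equality only if \(m(v)=2\). Part (a) of the same lemma gives \(g_j=\pi/2\) at a double vertex and \(g_j=\pi/3\) at a triple vertex. This proves the second and third sentences of the proposition.

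\emph{Triangles.} Let \(f\) be the number of faces and \(p_F\) the number of vertices of a face \(F\). Let \(V\) and \(E\) be the numbers of vertices and edges. Each line \(C\) is cut by its \(r_C\) vertices into \(r_C\) edges, so \(E=\sum_C r_C=\sum_v m(v)\). The faces are open discs by Lemma~\ref{lem:cells}(a), so Euler's formula for \(\RP^2\) gives \(V-E+f=1\). Around each vertex \(v\) the sector angles sum to \(2\pi\). Summing Girard's formula over all faces therefore gives
\[
2\pi=\operatorname{Area}(\RP^2)=\sum_F\Big(\sum_{v}\angle_vF-(p_F-2)\pi\Big)=2\pi V-\pi\sum_Fp_F+2\pi f .
\]
Each edge borders two faces (Lemma~\ref{lem:cells}(c)), so \(\sum_Fp_F=2E\). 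Substituting, the right-hand side becomes \(2\pi(V-E+f)=2\pi\). So this computation is only a tautology and yields nothing.

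I therefore need a local argument instead. Take a face \(F\) with \(p=p_F\ge4\). By Girard's formula its area is positive, so \(\sum_v\angle_vF>(p-2)\pi\). But every angle is at most \(\pi/2\), so \(\sum_v\angle_vF\le p\pi/2\). Combining, \((p-2)\pi<p\pi/2\), which forces \(p<4\). Hence \(p=3\) and every face is a triangle. Faces with \(p\le2\) do not occur, because \(\cA\) is essential. This local area argument is the whole proof. The only point to watch is that Girard's formula applies to the lifted convex polygon with the angles \(\angle_vF\), which Lemma~\ref{lem:cells}(a) guarantees.
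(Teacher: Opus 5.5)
Your argument is correct and is essentially the paper's proof. The angle statements come from Lemma~\ref{lem:balanced-pencils}(a),(b) at each vertex, and positivity of the area in Girard's formula, together with $\angle_vF\le\pi/2$, forces $p=3$; the Euler-characteristic computation you tried first is, as you noticed, a tautology and can simply be dropped.
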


\begin{proof}
Let \(F\) be a face with \(p\) vertices \(v_1,\dots,v_p\), and put \(\gamma_i=\angle_{v_i}F\). This is an angle between consecutive lines of the balanced pencil at \(v_i\), so \(\gamma_i\le\pi/2\) by Lemma~\ref{lem:balanced-pencils}(b). By Lemma~\ref{lem:girard}
\[
0<\operatorname{Area}(F)=\sum_i\gamma_i-(p-2)\pi\le\frac{p\pi}2-(p-2)\pi=\frac{(4-p)\pi}{2},
\]
so \(p=3\). At a double vertex Lemma~\ref{lem:balanced-pencils}(a) gives angle \(\pi/2\), and at a triple vertex it gives angle \(\pi/3\); the equality case in Lemma~\ref{lem:balanced-pencils}(b) shows that \(\angle_vF<\pi/2\) whenever \(m(v)\ge3\).
\end{proof}

\begin{lemma}\label{lem:sector-face}
If \(\rho,\rho'\) are consecutive rays at a vertex \(x\), then the face in the sector between them is the triangle with vertices \(x\), \(y(\rho)\), \(y(\rho')\).
\end{lemma}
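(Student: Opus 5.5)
Let \(F\) be the face containing the sector at \(x\) between \(\rho\) and \(\rho'\). This face is well defined and unique by Lemma~\ref{lem:cells}(b). By Proposition~\ref{prop:triangles}, \(\overline F\) is the image of a convex spherical triangle \(T\) contained in an open hemisphere. The vertex \(x\) is one of its corners, and the interior angle there is \(\angle_xF\), the angle of the chosen sector. The two sides of \(T\) at \(x\) are geodesic segments leaving \(x\) in the directions \(\rho\) and \(\rho'\). Each lies on a line of \(\cA\), since sides of \(\overline F\) map to closures of edges, and it starts along the edge leaving \(x\) along \(\rho\), respectively \(\rho'\).

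The plan is to identify the far endpoint of each side. Take the side along \(\rho\). It is a union of closures of edges lying on the line \(C\) of \(\rho\), and its first edge from \(x\) is the edge leaving \(x\) along \(\rho\). The closure of that edge is \([x,y(\rho)]\) by Definition~\ref{def:rays-neighbours}, so the side contains \([x,y(\rho)]\). We must show the side ends at \(y(\rho)\). Suppose instead that it continued straight through \(y(\rho)\). Then \(y(\rho)\) would lie on the boundary of the triangle \(T\) without being a corner, so the interior angle of \(T\) at \(y(\rho)\) would be \(\pi\).

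Now \(y(\rho)\) is a vertex in \(\overline F\), so by Lemma~\ref{lem:cells}(b) the face \(F\) contains exactly one sector at \(y(\rho)\), with angle \(\angle_{y(\rho)}F\). By Lemma~\ref{lem:cells}(a) this equals the interior angle of \(T\) there, which we have just seen is \(\pi\). But every sector at a vertex has angle at most \(\pi/2\) by Proposition~\ref{prop:triangles}, a contradiction. Hence \(y(\rho)\) is a corner of \(T\). The same argument shows \(y(\rho')\) is a corner.

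The two corners are distinct, since \(\rho\) and \(\rho'\) lie on different lines through \(x\) and each line meets the other only at \(x\). So the three corners of \(T\) are \(x\), \(y(\rho)\) and \(y(\rho')\), as claimed.

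The main obstacle is the step "a vertex lying in the interior of a side has interior angle \(\pi\)". This requires that the whole segment \([x,y(\rho)]\) and its continuation lie on the boundary of one convex polygon, and that the local picture at \(y(\rho)\) matches the polygon's angle. Both are supplied by Lemma~\ref{lem:cells}(a) together with the injectivity of the quotient map on \(\overline F\).
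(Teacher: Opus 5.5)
Your proof is correct and follows the paper's route. Like the paper, you locate the face via Lemma~\ref{lem:cells}, use Proposition~\ref{prop:triangles} to know it is a triangle, and identify its two sides at \(x\) with \([x,y(\rho)]\) and \([x,y(\rho')]\). The only difference is that the paper reads that identification directly off Lemma~\ref{lem:cells}(a), where sides are closures of edges, whereas your angle-\(\pi\) contradiction gives a valid, if slightly roundabout, justification that no vertex of \(\cA\) lies in the interior of a side.
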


\begin{proof}
By Lemma~\ref{lem:cells}(a),(b), the sector lies in a face \(F\) whose closure is a spherical polygon with a vertex at \(x\), and whose two sides at \(x\) are closures of the edges leaving \(x\) along \(\rho\) and \(\rho'\), that is, the segments \([x,y(\rho)]\) and \([x,y(\rho')]\) of Definition~\ref{def:rays-neighbours}. By Proposition~\ref{prop:triangles}, \(F\) is a triangle, so its vertices are \(x\), \(y(\rho)\), \(y(\rho')\).
\end{proof}

\subsection{The minimum face angle}\label{subsec:minimum-angle}

\begin{lemma}\label{lem:tworight}
No face of \(\cA\) has two double vertices.
\end{lemma}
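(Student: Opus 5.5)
The plan is to show that a face with two double vertices forces \(\cA\) to be a near-pencil, which irreducibility excludes. Suppose a face \(F\) has two double vertices \(u\) and \(w\). By Proposition~\ref{prop:triangles}, \(F\) is a triangle, say with vertices \(u,w,x\), and \(\angle_uF=\angle_wF=\pi/2\). Let \(C\in\cA\) be the line containing the side \([u,w]\). Lift \(\overline F\) to a convex spherical triangle in an open hemisphere (Lemma~\ref{lem:cells}(a)). There the sides at \(u\) and \(w\) other than \([u,w]\) are great-circle arcs orthogonal to the great circle of \(C\). Two great circles orthogonal to a given great circle meet only at its poles, so \(x\) is the pole \([n_C]\) of \(C\). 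In particular the sides \([x,u]\) and \([x,w]\) lie on lines \(D_u,D_w\in\cA\) through \(x\), and every line through \(x\) meets \(C\) orthogonally.

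Next, look at the face \(F'\) on the other side of the edge \((u,w)\) of \(C\), which exists by Lemma~\ref{lem:cells}(c). By Lemma~\ref{lem:sector-face} at \(u\) and Proposition~\ref{prop:triangles}, \(F'\) is a triangle with vertices \(u\), \(w\) and a third vertex \(x'\). The angles of \(F'\) at the double vertices \(u,w\) are again \(\pi/2\), so the same argument gives \(x'=[n_C]=x\). Hence the sides of \(F'\) at \(u,w\) lie on the continuations of \(D_u\) and \(D_w\) beyond \(C\), and \(\overline F\cup\overline{F'}\) is the closed lune \(L\) at \(x\) bounded by \(D_u\) and \(D_w\). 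This lune is the union of the points on the lines through \(x\) whose directions lie in the closed interval between those of \(D_u\) and \(D_w\). Its interior is \(F\cup F'\cup(u,w)\), and it meets no line of \(\cA\) except \(C\), along the open edge \((u,w)\).

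Now take any line \(D\in\cA\) other than \(C\) that does not pass through \(x\). Choose a line \(E\) through \(x\) whose direction lies strictly between those of \(D_u\) and \(D_w\); such an \(E\) need not belong to \(\cA\). Every point of \(E\) except \(x\) lies in the interior of \(L\). Since \(D\) does not pass through \(x\), the point \(D\cap E\) lies in the interior of \(L\), so it lies in \(F\), in \(F'\), or on \((u,w)\). It cannot lie in a face, since faces avoid all lines. It cannot lie on the open edge \((u,w)\) either, since \(D\ne C\) and \((u,w)\) contains no vertex of \(\cA\). This contradiction shows that every line of \(\cA\) other than \(C\) passes through \(x\). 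Then \(\cA\) is a near-pencil, contradicting irreducibility.

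The step needing the most care is identifying \(x'\) with \(x\) and describing \(\overline F\cup\overline{F'}\) as the lune. This uses the lifts to \(S^2\) given by Lemma~\ref{lem:cells}(a), applied to \(F\) and \(F'\) separately. One must check that both lifts can be taken on the same side of the edge \((u,w)\), so that their third vertices are the two antipodal poles \(\pm n_C\), which give the same point of \(\RP^2\).
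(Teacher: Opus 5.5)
Your proof is correct. After the common first step, where $x$ is identified with the pole $[n_C]$ of the line $C$ through $u,w$, it takes a different route from the paper.

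\textbf{The paper's route.} It walks once around $x$. Using Lemma~\ref{lem:sector-face} and induction over the $2m(x)$ rays, it shows that every neighbour of $x$ lies on $C$ and is double: each gets a right angle because every line through $x$ is orthogonal to $C$. The faces with apex $x$ therefore lift to triangles tiling a closed hemisphere. Hence they are all the faces, and $\cA$ is a near-pencil.

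\textbf{Your route.} You use only the single face $F'$ across the edge $(u,w)$. Since $u,w$ are double, $F'$ also has right angles there, so its apex is again $x$. Then $\overline F\cup\overline{F'}$ is a lune whose interior $F\cup F'\cup(u,w)$ contains $E\setminus\{x\}$ for a line $E$ through $x$. The fact that two projective lines always meet then forces every line of $\cA$ other than $C$ through $x$.

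\textbf{Comparison.} Your version avoids the induction and the step showing the $F_i$ exhaust $\RP^2$. It needs only local information at one edge plus an elementary incidence argument. The paper's version yields the full global structure (all faces are the triangles with apex $x$) and uses the ray/neighbour machinery that is reused later.

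\textbf{The lifting check you flag is routine.}
\begin{enumerate}
\item The identity $x'=x$ needs no care at all, since both $\pm n_C$ project to $[n_C]$.
\item For the lune, take the lift of $\overline{F'}$ that contains the same lift $[\hat u,\hat w]$ of the edge as the lift of $\overline F$. Your phrase ``on the same side of the edge'' should be read this way: the two lifts share a lift of the edge and lie on opposite sides of it.
\item Because the quotient map is a local isometry, the two lifted interiors lie on opposite sides of $\hat C$. So the apices are $n_C$ and $-n_C$, and the union is the spherical lune between the meridians through $\hat u$ and $\hat w$.
\item The open lune is the union of the two open triangles and the open arc $(\hat u,\hat w)$, and it contains every open meridian strictly between the two. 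That gives $E\setminus\{x\}\subset F\cup F'\cup(u,w)$ directly.
\end{enumerate}
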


\begin{proof}
Suppose a face \(T\) has double vertices \(u,w\) and third vertex \(x\), and let \(L\in\cA\) contain the side \([u,w]\). By Proposition~\ref{prop:triangles}, \(T\) has right angles at \(u\) and \(w\). In a lift of \(T\) to \(S^2\), the two distinct great circles through \(\hat x,\hat u\) and \(\hat x,\hat w\) meet \(\hat L\) orthogonally, so both pass through the poles \(\pm N\) of \(\hat L\), and \(\hat x=\pm N\); we may take \(\hat x=N\). Thus every line through \(x\) meets \(L\) orthogonally. Let \(\rho_1,\dots,\rho_{2m}\) be the rays at \(x\) in cyclic order, with \(T\) between \(\rho_1\) and \(\rho_2\), and let \(F_i\) be the face between \(\rho_i\) and \(\rho_{i+1}\), with vertices \(x,y(\rho_i),y(\rho_{i+1})\) by Lemma~\ref{lem:sector-face}. If \(y(\rho_i),y(\rho_{i+1})\in L\), then, with \(b=y(\rho_{i+1})\), the sector of \(F_i\) at \(b\) is bounded by \(L\) and the line \(xb\), which are orthogonal, so \(\angle_bF_i=\pi/2\) and \(b\) is double by Proposition~\ref{prop:triangles}; hence the side of \(F_{i+1}\) at \(b\) other than \([x,b]\) lies on \(L\), and \(y(\rho_{i+2})\in L\). By induction all \(y(\rho_i)\) lie on \(L\). The lifts of the \(F_i\) are the triangles with apex \(N\) and bases on the equator \(\hat L\), filling the closed hemisphere centred at \(N\), which maps onto \(\RP^2\). So the \(F_i\) are all the faces, and every line of \(\cA\) is \(L\) or passes through \(x\notin L\): \(\cA\) is a near-pencil, contradicting irreducibility.
\end{proof}

\begin{proposition}\label{prop:min-face-angle}
Suppose that \(\cA\) has a high vertex, and let
\[
\delta=\min\{\angle_vF:F\text{ is a face and }v\in\overline F\text{ is a vertex}\}.
\]
\begin{enumerate}[label=\textup{(\alph*)}]
\item \(\pi/6<\delta\le\pi/4\), and every vertex at which the minimum angle \(\delta\) occurs is high.
\item If a face \(F\) has angle \(\delta\) at a vertex \(v\), then the other two vertices of \(F\) are one double vertex and one triple vertex.
\item If \(v\) is high and \(F\) is a face having \(v\) as a vertex, then \(\delta\le\angle_vF\le\frac\pi2-\delta\); in particular, \(\angle_vF<\pi/3\).
\end{enumerate}
\end{proposition}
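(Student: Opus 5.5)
The plan is to combine Lemma~\ref{lem:balanced-pencils}, the face-angle values from Proposition~\ref{prop:triangles}, Lemma~\ref{lem:tworight}, and Girard's formula. The key point is that every face is a spherical triangle of positive area, so its three angles sum to more than \(\pi\).

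\emph{Step 1: the bounds in (c), without the final strict inequality.} Let \(v\) be high and \(F\) a face at \(v\). Then \(\angle_vF\ge\delta\) by definition of \(\delta\). Lemma~\ref{lem:balanced-pencils}(c) applies at \(v\) since \(m(v)\ge4\). It gives
\[
\angle_vF\le\frac\pi2-\min_j g_j(v)\le\frac\pi2-\delta,
\]
because the angles \(g_j(v)\) between consecutive lines at \(v\) are exactly the face angles at \(v\).

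\emph{Step 2: where the minimum occurs, and the upper bound in (a).} At any high vertex, (c) of Lemma~\ref{lem:balanced-pencils} gives \(2\min_j g_j\le\min_j g_j+\max_j g_j\le\pi/2\), so some face angle is \(\le\pi/4\). Since a high vertex exists, \(\delta\le\pi/4\). Face angles at double and triple vertices are \(\pi/2\) and \(\pi/3\) by Proposition~\ref{prop:triangles}, both \(>\pi/4\). Hence the minimum \(\delta\) is attained only at high vertices.

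\emph{Step 3: the case analysis giving (b) and \(\delta>\pi/6\).} Let \(F\) have angle \(\delta\) at \(v\), with other vertices \(u,w\) and angles \(a,b\). By Lemma~\ref{lem:girard}, \(\delta+a+b>\pi\). Each of \(a,b\) is \(\pi/2\) if its vertex is double, \(\pi/3\) if triple, and \(\le\pi/2-\delta\) if high, by Step 1. Since \(v\) is high, Lemma~\ref{lem:tworight} allows at most one of \(u,w\) to be double. We split into cases.
\begin{itemize}
\item \emph{One double, one high:} \(\delta+a+b\le\delta+\frac\pi2+\frac\pi2-\delta=\pi\), a contradiction.
\item \emph{Neither double:} \(\delta+a+b\le\delta+2\max(\pi/3,\pi/2-\delta)\).
 \begin{itemize}
 \item If \(\delta\le\pi/6\), this bound is \(\pi-\delta<\pi\).
 \item Otherwise it is \(\le\pi/4+2\pi/3<\pi\).
 \end{itemize}
 Both contradict Girard.
\end{itemize}
So one of \(u,w\) is double and the other is triple, which is (b). Then \(\delta+\pi/2+\pi/3>\pi\) gives \(\delta>\pi/6\), completing (a). Finally, in (c), \(\pi/2-\delta<\pi/3\) follows from \(\delta>\pi/6\).

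The only delicate point is Step 3, namely organising the cases so that every configuration other than double\(+\)triple is excluded. Ruling out double\(+\)high relies exactly on the sharp bound \(\pi/2-\delta\) from Step 1, so the upper bound in (c) must be established before (a) and (b).
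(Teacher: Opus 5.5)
Your proposal is correct and follows essentially the same route as the paper: Lemma~\ref{lem:balanced-pencils}(c) gives the bounds in (c), Girard's formula is applied to a face with angle \(\delta\) at \(v\), Lemma~\ref{lem:tworight} excludes two double vertices, and \(\delta>\pi/6\) then comes from the double\(+\)triple configuration. The differences are only organisational: the paper gets \(\delta\le\pi/4\) from the \(m(v)\ge4\) angles at a high vertex summing to \(\pi\), and it rules out a high neighbour in one stroke by pairing \(\beta\le\pi/2-\delta\) with the generic bound \(\gamma\le\pi/2\), which avoids your sub-case split on whether \(\delta\le\pi/6\).
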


\begin{proof}
If \(v\) is double or triple, then every face having \(v\) as a vertex has angle \(\pi/2\) or \(\pi/3\) at \(v\), respectively, by Proposition~\ref{prop:triangles}. If \(v\) is high, the \(m(v)\ge4\) angles between consecutive lines at \(v\) sum to \(\pi\), so the smallest is at most \(\pi/4\); these are precisely the angles \(\angle_vF\) of the faces \(F\) having \(v\) as a vertex. Hence \(\delta\le\pi/4\), and every vertex at which \(\delta\) occurs is high. If \(v\) is high, Lemma~\ref{lem:balanced-pencils}(c) gives \(\delta\le\angle_vF\le\frac\pi2-\delta\) for every face \(F\) having \(v\) as a vertex.

Choose a face \(T\) and a vertex \(v\in\overline T\) such that \(\angle_vT=\delta\), let \(u,w\) be the other two vertices of \(T\), and write
\[
\beta=\angle_uT,\qquad\gamma=\angle_wT.
\] Lemma~\ref{lem:girard} gives \(\delta+\beta+\gamma>\pi\). If \(u\) were high, then \(\beta\le\pi/2-\delta\) and \(\gamma\le\pi/2\), so \(\delta+\beta+\gamma\le\pi\); the same holds for \(w\). So \(u\) and \(w\) are double or triple. They are not both double by Lemma~\ref{lem:tworight}, and not both triple since otherwise \(\delta>\pi/3\), contradicting \(\delta\le\pi/4\). Hence one is double, the other triple, and \(\delta>\pi-\frac\pi2-\frac\pi3=\frac\pi6\). With \(\delta\le\angle_vF\le\frac\pi2-\delta\), this gives \(\angle_vF<\pi/3\) in (c).
\end{proof}

\begin{proposition}\label{prop:oneright}
Every face of \(\cA\) has exactly one double vertex.
\end{proposition}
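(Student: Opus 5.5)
The plan is to combine Lemma~\ref{lem:tworight} with an area count via Girard's formula. Let \(F\) be a face. By Proposition~\ref{prop:triangles}, \(F\) is a triangle, with vertices \(v_1,v_2,v_3\) and angles \(\gamma_i=\angle_{v_i}F\). Lemma~\ref{lem:tworight} already shows that \(F\) has at most one double vertex. So it remains to show that \(F\) has at least one.

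Suppose for contradiction that none of \(v_1,v_2,v_3\) is double, so each is triple or high. I will bound each angle by \(\pi/3\).
\begin{itemize}
\item If \(v_i\) is triple, then \(\gamma_i=\pi/3\) by Proposition~\ref{prop:triangles}.
\item If \(v_i\) is high, then \(\cA\) has a high vertex, so Proposition~\ref{prop:min-face-angle} applies. Part~(c) gives \(\gamma_i\le\frac\pi2-\delta<\pi/3\), using \(\delta>\pi/6\) from part~(a).
\end{itemize}
In particular, if \(\cA\) has no high vertex, all three vertices are triple and every angle equals \(\pi/3\). In all cases \(\gamma_1+\gamma_2+\gamma_3\le\pi\).

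Lemma~\ref{lem:girard}, applied to the lift of \(\overline F\) to \(S^2\) given by Lemma~\ref{lem:cells}(a), yields
\[
0<\operatorname{Area}(F)=\gamma_1+\gamma_2+\gamma_3-\pi\le0,
\]
which is a contradiction. Hence \(F\) has at least one double vertex, and therefore exactly one.

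I expect no serious obstacle here. The only point needing care is that Proposition~\ref{prop:min-face-angle} may only be invoked when a high vertex exists. This is exactly the case in which it is needed, since a high vertex of \(F\) is itself a high vertex of \(\cA\). When \(\cA\) has no high vertex, Proposition~\ref{prop:triangles} alone suffices.
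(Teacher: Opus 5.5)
Your proposal is correct and follows essentially the same route as the paper. You use Lemma~\ref{lem:tworight} for ``at most one''. You bound every angle at a non-double vertex by \(\pi/3\), via Proposition~\ref{prop:triangles} for triple vertices and Proposition~\ref{prop:min-face-angle}(c) for high ones, and you get a contradiction with Girard's formula, treating the case with no high vertex separately just as the paper does.
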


\begin{proof}
By Lemma~\ref{lem:tworight} a face has at most one. If \(\cA\) has a high vertex, Propositions~\ref{prop:triangles} and~\ref{prop:min-face-angle}(c) imply that every angle of a face at a non-double vertex is at most \(\pi/3\). Hence a face with no double vertex would have angle sum at most \(\pi\), contradicting Lemma~\ref{lem:girard}. If \(\cA\) has no high vertex, such a face would have three angles \(\pi/3\), again contradicting Lemma~\ref{lem:girard}.
\end{proof}

\subsection{Alternation and adjacent faces}\label{subsec:adjacent-faces}

\begin{definition}\label{def:rays}
A ray \(\rho\) at a vertex \(x\) is a \emph{D-ray} if its neighbour \(y(\rho)\), in the sense of Definition~\ref{def:rays-neighbours}, is a double vertex, and an \emph{N-ray} otherwise.
\end{definition}

\begin{lemma}\label{lem:alternation}
At a non-double vertex, D-rays and N-rays alternate in cyclic order. At a double vertex all four rays are N-rays. If the multiplicity is odd, the two opposite rays of each line have different types.
\end{lemma}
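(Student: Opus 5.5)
The plan is to read the ray types off the faces around the vertex, using the two facts already established: every face is a triangle (Proposition~\ref{prop:triangles}), and every face has exactly one double vertex (Proposition~\ref{prop:oneright}). Lemma~\ref{lem:sector-face} identifies the face in each sector at \(x\).

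Let \(x\) be a vertex with rays \(\rho_1,\dots,\rho_{2m}\) in cyclic order, where \(m=m(x)\). Let \(F_i\) be the face in the sector between \(\rho_i\) and \(\rho_{i+1}\), with indices taken mod \(2m\). By Lemma~\ref{lem:sector-face}, \(F_i\) has vertices \(x\), \(y(\rho_i)\) and \(y(\rho_{i+1})\).

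\emph{Double vertex.} Suppose \(x\) is double. Then \(x\) is the unique double vertex of each \(F_i\), so every neighbour \(y(\rho_i)\) is non-double. Hence all four rays at \(x\) are N-rays.

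\emph{Non-double vertex.} Now suppose \(x\) is non-double. Then exactly one of \(y(\rho_i)\) and \(y(\rho_{i+1})\) is double. So for each \(i\), exactly one of \(\rho_i,\rho_{i+1}\) is a D-ray, which says precisely that D-rays and N-rays alternate cyclically. Since this holds around the whole cycle of \(2m\) rays, the types at \(x\) are determined by the parity of the index.

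\emph{Odd multiplicity.} The two rays of a single line are opposite, so they are \(\rho_i\) and \(\rho_{i+m}\). When \(m\) is odd, \(i\) and \(i+m\) have different parity. By the alternation just proved, the two rays therefore have different types.

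The only point needing care is that the neighbours \(y(\rho_i)\) and \(y(\rho_{i+1})\) are distinct vertices of a genuine triangle. They cannot coincide: distinct lines through \(x\) meet only at \(x\), and two opposite rays of one line belong to non-adjacent sectors when \(m\ge2\). Lemma~\ref{lem:sector-face} already guarantees this, so I do not expect a real obstacle. The argument is essentially combinatorial bookkeeping on top of Propositions~\ref{prop:triangles} and~\ref{prop:oneright}.
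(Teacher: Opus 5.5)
Your proposal is correct and follows the paper's proof essentially step for step. Lemma~\ref{lem:sector-face} identifies the face in each sector as the triangle with vertices $x$, $y(\rho_i)$, $y(\rho_{i+1})$; Proposition~\ref{prop:oneright} forces exactly one double vertex among these, which gives both the double-vertex case and the alternation; and the parity of $i$ versus $i+m$ settles the odd-multiplicity case. Your extra remark on the distinctness of the neighbours is harmless but already implicit in Lemma~\ref{lem:sector-face}.
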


\begin{proof}
For consecutive rays \(\rho,\rho'\) at \(x\), the face between them has vertices \(x,y(\rho),y(\rho')\) by Lemma~\ref{lem:sector-face}, and exactly one of them is double by Proposition~\ref{prop:oneright}. If \(x\) is non-double, exactly one of \(y(\rho)\) and \(y(\rho')\) is double, so consecutive rays have opposite types and D-rays and N-rays alternate. If \(x\) is double, neither \(y(\rho)\) nor \(y(\rho')\) is double, so all four rays at \(x\) are N-rays. Number the rays \(\rho_1,\dots,\rho_{2m}\) cyclically; the two rays of a line are \(\rho_i\) and \(\rho_{i+m}\), which have different types when \(m\) is odd.
\end{proof}

\begin{lemma}\label{lem:hypotenuse}
Let \(e\) be an edge with endpoints \(x\) and \(z\), where \(x\) is non-double and \(z\) is triple, and let \(T,T'\) be the two faces whose closures contain \(e\). Then \(\angle_xT=\angle_xT'\).
\end{lemma}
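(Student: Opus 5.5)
The plan is to show that \(T\) and \(T'\) are congruent spherical right triangles that share the side \(e\), and then to read off \(\angle_xT=\angle_xT'\) from a right-triangle identity.

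\emph{Identify the third vertices.} By Proposition~\ref{prop:triangles}, \(T\) and \(T'\) are triangles, and \(x,z\) are two vertices of each. Write \(y\) and \(y'\) for their third vertices. By Proposition~\ref{prop:oneright}, each face has exactly one double vertex. Since \(x\) is non-double and \(z\) is triple, the double vertex of \(T\) must be \(y\), and that of \(T'\) must be \(y'\). Proposition~\ref{prop:triangles} then fixes two of the angles:
\[
\angle_yT=\angle_{y'}T'=\frac\pi2,\qquad \angle_zT=\angle_zT'=\frac\pi3.
\]
The second equality holds because every face at a triple vertex has angle \(\pi/3\) there.

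\emph{Compare the two triangles.} By Lemma~\ref{lem:cells}(a), lift \(\overline T\) and \(\overline{T'}\) to convex spherical triangles, each in an open hemisphere, with the lifting locally isometric. Both lifts have the side lifting \(\overline e\), of common length \(c\in(0,\pi)\). In each, this side is opposite the right angle, so it is the hypotenuse, and it is adjacent to the angle \(\pi/3\) at \(z\). Apply Napier's rule \(\cos c=\cot A\cot B\) for a spherical right triangle with hypotenuse \(c\) and non-right angles \(A,B\). With \(A=\pi/3\) this gives
\[
\cot(\angle_xT)=\cos c\,\tan\frac\pi3=\cot(\angle_xT').
\]
Both angles lie in \((0,\pi)\), where \(\cot\) is injective, so \(\angle_xT=\angle_xT'\).

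\emph{Alternative without trigonometry.} Reflect in the line \(L\) containing \(e\); this is an isometry of \(\RP^2\). At \(z\) the three lines are equally spaced at \(\pi/3\) (Lemma~\ref{lem:balanced-pencils}(a)). So the reflection in \(L\) swaps the two other lines through \(z\), and maps the ray from \(z\) toward \(y\) to the ray from \(z\) toward \(y'\). The point \(y\) is the foot of the perpendicular from \(x\) to the line \(zy\). The reflected triangle shares the side \(e\) and has a right angle on the ray \(zy'\). Uniqueness of the foot of the perpendicular inside a hemisphere then forces the reflection of \(y\) to be \(y'\), and the equality of angles at \(x\) follows.

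\emph{Main obstacle.} The only real care needed is justifying that the right-triangle formula, or the uniqueness of the perpendicular foot, applies. This uses that the lifted triangles are genuine convex spherical triangles with sides shorter than \(\pi\). In particular \(x\) is not the pole of the line \(zy\), since the angle at \(z\) is \(\pi/3\neq\pi/2\). All of this is supplied by Lemma~\ref{lem:cells}(a).
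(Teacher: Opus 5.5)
Your proof is correct, and your primary argument takes a different route from the paper. Your ``alternative without trigonometry'' is essentially the paper's own proof. The paper works synthetically. Let \(L\) be the line of \(e\), let \(D,D'\) be the other lines through \(z\) bounding \(T,T'\), and let \(E,E'\) be the other lines through \(x\). Equal spacing at \(z\) makes the reflection \(\sigma_L\) swap \(D\) and \(D'\). The double third vertices give \(E\perp D\) and \(E'\perp D'\). Since \(x\) is not the pole of \(D'\) (\(L\) meets \(D'\) at angle \(\pi/3\)), only one line through \(x\) is perpendicular to \(D'\). Hence \(\sigma_L(E)=E'\) and \(\sigma_L\) carries \(T\) to \(T'\); you phrase the same uniqueness via the foot of the perpendicular. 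Your main route instead applies the cosine law for angles, \(\cos C=-\cos A\cos B+\sin A\sin B\cos c\), with \(C=\pi/2\); this is Napier's \(\cos c=\cot A\cot B\). You apply it to the two lifted right triangles, which share the hypotenuse \(\overline e\) and have angle \(\pi/3\) at \(z\), and obtain \(\cot\angle_xT=\sqrt3\cos c=\cot\angle_xT'\). This is shorter and avoids checking which sectors the reflection carries to which. It also shows exactly what is needed about \(z\): equal angles on the two sides of \(e\), different from \(\pi/2\). If \(\cot A=0\) the identity says nothing about \(B\), which is the trigonometric counterpart of the paper's condition that \(x\) is not the pole of \(D'\). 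The paper's version imports no trigonometric identities and stays within the reflection geometry already set up. It also proves the stronger statement that \(T'\) is the mirror image of \(T\) in \(L\), which fits the later reflection-group argument.
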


\begin{proof}
Let \(L\in\cA\) be the line containing \(e\). By Proposition~\ref{prop:oneright}, the third vertex of each face is double, so \(T\) and \(T'\) are right triangles with common side \(\overline e\) opposite their double vertices.

Let \(D\) and \(D'\) be the lines through \(z\), other than \(L\), containing sides of \(T\) and \(T'\), respectively, and let \(E\) and \(E'\) be the lines through \(x\), other than \(L\), containing sides of \(T\) and \(T'\). The sectors of \(T\) and \(T'\) at \(z\) are the two sectors at \(z\) adjacent to \(e\), so \(D\ne D'\). Reflection in \(L\) fixes \(x\) and \(z\). Since \(z\) is triple, Lemma~\ref{lem:balanced-pencils}(a) implies that the three lines \(L,D,D'\) through \(z\) are equally spaced; reflection in \(L\) therefore interchanges \(D\) and \(D'\).

The vertex \(D\cap E\) of \(T\) is double, so Proposition~\ref{prop:triangles} gives \(E\perp D\). Reflection in \(L\) preserves orthogonality, hence sends \(E\) to a line through \(x\) perpendicular to \(D'\). This line is unique, because \(x\) is not the pole of \(D'\): the line \(L\in\cA_x\) meets \(D'\) at angle \(\pi/3\). The vertex \(D'\cap E'\) of \(T'\) is also double, so \(E'\perp D'\); therefore the reflection of \(E\) is \(E'\). Thus reflection in \(L\) maps \(T\) to \(T'\), and \(\angle_xT=\angle_xT'\).
\end{proof}

\subsection{Equal spacing}\label{subsec:equal-spacing}

\begin{proposition}\label{prop:equal-spacing}
The lines of \(\cA\) through every vertex are equally spaced.
\end{proposition}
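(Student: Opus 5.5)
The plan is to show that at every vertex \(x\) all angles \(\angle_xF\) of the faces having \(x\) as a vertex coincide. By Lemma~\ref{lem:cells}(b) these are exactly the angles between consecutive lines at \(x\), so this is the claim. Double and triple vertices are already settled by Lemma~\ref{lem:balanced-pencils}(a). So let \(x\) be high, with rays \(\rho_1,\dots,\rho_{2m}\) in cyclic order and faces \(F_i\) between \(\rho_i\) and \(\rho_{i+1}\). By Lemma~\ref{lem:sector-face} and Proposition~\ref{prop:oneright}, each \(F_i\) is a right triangle \(x\,d_i\,z_i\) with its right angle at the double vertex \(d_i\). By Lemma~\ref{lem:alternation}, the rays alternate between D-rays (pointing to some \(d_i\)) and N-rays (pointing to some \(z_i\)). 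I will compare the angles of two faces that are adjacent across a ray at \(x\), separately for the two types of ray.

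\emph{Across an N-ray.} The two adjacent faces share the hypotenuse \([x,z]\). If \(z\) is triple, Lemma~\ref{lem:hypotenuse} gives equal angles at \(x\).

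\emph{Across a D-ray.} The two faces share the leg \([x,d]\), and their other legs at \(d\) lie on the single line through \(d\) perpendicular to \(xd\). Here I use the spherical right-triangle relation \(\cos Z=\cos|xd|\,\sin g\), where \(g\) is the angle at \(x\) and \(Z\) the angle at the third vertex \(z\). If both third vertices are triple, both faces have \(Z=\pi/3\) by Proposition~\ref{prop:triangles}. Then \(\sin g=\sin g'\) with \(g,g'<\pi/2\), so \(g=g'\).

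Hence everything reduces to the following \textbf{Key claim:} every non-double neighbour \(z_i\) of a high vertex is triple. Granting it, the two comparisons propagate equality around all \(2m\) sectors at \(x\), and \(x\) is equally spaced. Alternatively, only the N-ray equalities are needed when \(m\) is odd: sectors \(i\) and \(i+m\) have equal angles, and opposite rays have different types by Lemma~\ref{lem:alternation}.

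The Key claim is the main obstacle. I would attack it through Proposition~\ref{prop:min-face-angle}, propagating outward from an angle equal to \(\delta\).
\begin{enumerate}[label=\textup{(\arabic*)}]
\item Start at a face \(T\) with \(\angle_xT=\delta\). Its third vertex is triple by Proposition~\ref{prop:min-face-angle}(b).
\item Across the N-ray, Lemma~\ref{lem:hypotenuse} gives a second face with angle \(\delta\) at \(x\). Its third vertex is again triple by Proposition~\ref{prop:min-face-angle}(b).
\item Across a D-ray the next face \(T'\) has angle \(g'\ge\delta\) at \(x\), and I must exclude that its third vertex \(z'\) is high.
\end{enumerate}
For step (3), the constraints I expect to use are:
\begin{itemize}
\item \(\cos Z'=\sin g'/(2\sin\delta)\), from the shared leg \([x,d]\) (the relation above for \(T\), with \(Z=\pi/3\) and \(g=\delta\));
\item \(\delta\le Z'\le\pi/2-\delta\), from Proposition~\ref{prop:min-face-angle}(c) if \(z'\) is high;
\item \(g'+Z'>\pi/2\), from Lemma~\ref{lem:girard};
\item the same relation and bounds applied at the high vertex \(z'\) to its own faces, choosing the minimal configuration there.
\end{itemize}
If these inequalities do not close directly, the fallback is a global extremal argument. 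I would choose the pair (face, high vertex) realising \(\delta\), and the adjacent face across the D-ray that maximises \(g'\), and derive a contradiction from the above relations combined with the balance condition at \(z'\).

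Once the Key claim holds, every angle at \(x\) equals \(\delta\), so \(m(x)=\pi/\delta\) and the lines through \(x\) are equally spaced.
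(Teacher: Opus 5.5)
\textbf{Gap: the Key claim is never proved.} Your conditional reduction is sound. The N-ray comparison is Lemma~\ref{lem:hypotenuse}, and the D-ray comparison via \(\cos Z=\cos|xd|\,\sin g\) is correct spherical trigonometry. But everything rests on the Key claim, and step~(3) does not establish it as set up.

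The explicit constraints you list can all hold with \(z'\) high. Through your relation \(\cos Z'=\sin g'/(2\sin\delta)\), Girard's inequality \(g'+Z'>\pi/2\) is equivalent to \(\sin\delta>1/2\), which you already know. For a concrete instance, take \(\delta=\pi/5\) and \(g'=50^\circ\). Then \(\cos Z'\approx0.652\), so \(Z'\approx49^\circ\), and both \(g'\) and \(Z'\) lie in \([\delta,\pi/2-\delta]\). The relation does exclude a high \(z'\) when \(g'=\delta\), since then \(Z'=\pi/3\); but knowing \(g'=\delta\) is already the equal spacing you want to prove. The ``global extremal'' fallback is not an argument.

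There is also a structural hole. Your attack starts from a face with angle \(\delta\) at \(x\), so it only reaches vertices where \(\delta\) is attained. A high vertex all of whose angles exceed \(\delta\) is never treated. Relatedly, your last sentence should give \(\pi/m(x)\), not \(\delta\), at such vertices.

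\textbf{How the paper avoids the Key claim.} The paper never needs it in general; it relies on three ideas your plan lacks.

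(i) At a vertex \(v\) realising \(\delta\), the bounds \(m(v)\delta\le\pi\) and \(\delta>\pi/6\) give \(m(v)\in\{4,5\}\), and N-ray propagation alone then suffices there, with no D-ray step. For \(m=4\), two consecutive angles equal to \(\delta\), combined with the pattern \(\alpha,\frac\pi2-\alpha,\alpha,\frac\pi2-\alpha\) of Lemma~\ref{lem:balanced-pencils}(d), force \(\delta=\pi/4\). For \(m=5\), the opposite-sector chain you mention gives \(\delta=\pi/5\).

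(ii) Every face angle is \(\ge\delta=\pi/m(v)\), and the angles at a vertex sum to \(\pi\). Hence every vertex has multiplicity \(\le m(v)\), with equality only if it is equally spaced. This settles all vertices of maximal multiplicity at once, and finishes the proof when \(m(v)=4\).

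(iii) The only remaining case is a \(4\)-fold vertex \(x\) when \(m(v)=5\), and there a single N-ray suffices. Its neighbour \(z\) cannot be \(5\)-fold: the angle at \(z\) would be \(\delta\), and Proposition~\ref{prop:min-face-angle}(b) would force \(x\) to be double or triple. It cannot be \(4\)-fold either: adding Girard's inequalities \(\alpha+\beta>\pi/2\) and \(\alpha'+\beta'>\pi/2\) for the two right triangles on \([x,z]\) contradicts \(\alpha+\alpha'=\beta+\beta'=\pi/2\), which holds by Lemma~\ref{lem:balanced-pencils}(d) at both ends. So \(z\) is triple. Lemma~\ref{lem:hypotenuse} then gives \(\alpha=\alpha'=\pi/4\), and Lemma~\ref{lem:balanced-pencils}(d) finishes.
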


\begin{proof}
\emph{Step 0.} By Lemma~\ref{lem:balanced-pencils}(a), double and triple vertices are equally spaced, so we may assume that a high vertex exists, and let \(\delta\) be as in Proposition~\ref{prop:min-face-angle}. Choose a face \(F\) and a vertex \(v\in\overline F\) such that \(\angle_vF=\delta\). By Proposition~\ref{prop:min-face-angle}(a), the vertex \(v\) is high.

\emph{Step 1: propagation.} The sector of \(F\) at \(v\) has angle \(\delta\). Let \(S\) be any sector at \(v\) of angle \(\delta\), and let \(F_S\) be the face containing \(S\), so that \(\angle_vF_S=\delta\). By Proposition~\ref{prop:min-face-angle}(b) the two vertices of \(F_S\) other than \(v\) are a double vertex and a triple vertex \(z\), so by Lemma~\ref{lem:sector-face} one of the two rays bounding \(S\), its \emph{N-ray side}, is an N-ray with neighbour \(z\). By Lemma~\ref{lem:sector-face}, the closures of \(F_S\) and of the face containing the sector on the other side of this ray both contain \([v,z]\), so Lemma~\ref{lem:hypotenuse} gives
\begin{equation}\label{eq:delta-propagation}
\text{if a sector at \(v\) has angle \(\delta\), then so does the sector across its N-ray side.}
\end{equation}

\emph{Step 2: the multiplicity of \(v\).} Let \(m=m(v)\). Since \(m\delta\le\pi\) and \(\delta>\pi/6\), we have \(m\in\{4,5\}\). Number the rays at \(v\) as \(\rho_1,\dots,\rho_{2m}\) in cyclic order, indices mod \(2m\), and let \(S_i\) be the sector between \(\rho_i\) and \(\rho_{i+1}\). Opposite sectors \(S_i\) and \(S_{i+m}\) have the same angle, and \(S_1,\dots,S_m\) realise the \(m\) angles between consecutive lines. Choose the starting ray and the orientation so that \(S_1\) has angle \(\delta\) and its N-ray side in \eqref{eq:delta-propagation} is \(\rho_2\). By Lemma~\ref{lem:alternation} the types of the \(\rho_i\) alternate, so the even rays are N-rays, and \(S_i\) has its N-ray side \(\rho_{i+1}\) if \(i\) is odd and \(\rho_i\) if \(i\) is even.

If \(m=4\), then \eqref{eq:delta-propagation} applied to \(S_1\) shows that \(S_2\) has angle \(\delta\). These are consecutive angles of a balanced pencil of four lines, so \(2\delta=\pi/2\) by Lemma~\ref{lem:balanced-pencils}(d), and all four angles equal \(\pi/4\).

If \(m=5\), \eqref{eq:delta-propagation} carries a sector \(S_i\) of angle \(\delta\) with \(i\) odd to \(S_{i+1}\), and \(S_{i+1}\) has the same angle as the opposite sector \(S_{i+6}\), whose index is again odd. Starting from \(S_1\), this gives the chain
\[
S_1\to S_2=S_7\to S_8=S_3\to S_4=S_9\to S_{10}=S_5 ,
\]
where each arrow is an application of \eqref{eq:delta-propagation} and an equality means that opposite sectors have equal angles. Hence \(S_1,\dots,S_5\) all have angle \(\delta\), and \(\delta=\pi/5\).

In both cases \(\delta=\pi/m(v)\), and \(v\) is equally spaced.

\emph{Step 3: vertices of maximal multiplicity.} If \(x\) has multiplicity \(q\), then the \(q\) angles between consecutive lines at \(x\) are precisely the angles \(\angle_xF'\) of the faces \(F'\) having \(x\) as a vertex, so they are at least \(\delta=\pi/m(v)\); they sum to \(\pi\), so
\begin{equation}\label{eq:max-multiplicity}
q\le m(v),
\end{equation}
with equality only if all of them equal \(\pi/m(v)\). Thus every vertex of multiplicity \(m(v)\) is equally spaced. If \(m(v)=4\), \eqref{eq:max-multiplicity} shows that every high vertex has multiplicity \(4\), hence is equally spaced by the equality case of \eqref{eq:max-multiplicity}; together with Lemma~\ref{lem:balanced-pencils}(a), this proves the proposition in this case.

\emph{Step 4: \(4\)-fold vertices when \(m(v)=5\).} Now \(\delta=\pi/5\), and every vertex has multiplicity at most \(5\) by \eqref{eq:max-multiplicity}. By Step~3 and Lemma~\ref{lem:balanced-pencils}(a), it remains to show that every vertex \(x\) of multiplicity \(4\) is equally spaced. Choose an N-ray at \(x\) (Lemma~\ref{lem:alternation}), and let \(z\) be its neighbour, so that \(m(z)\in\{3,4,5\}\). Let \(T,T'\) be the two faces whose closures contain \([x,z]\), and put
\[
\alpha=\angle_xT,\qquad\beta=\angle_zT,\qquad\alpha'=\angle_xT',\qquad\beta'=\angle_zT'.
\]
The sectors of \(T\) and \(T'\) at \(x\) are the two sectors at \(x\) adjacent to the chosen ray, so \(\alpha,\alpha'\) are consecutive angles at \(x\) and \(\alpha+\alpha'=\pi/2\) by Lemma~\ref{lem:balanced-pencils}(d).

If \(m(z)=5\), then \(z\) is equally spaced by the equality case of \eqref{eq:max-multiplicity}, so \(\beta=\pi/5=\delta\), and Proposition~\ref{prop:min-face-angle}(b) shows that the vertices of \(T\) other than \(z\) are a double vertex and a triple vertex. This is impossible, since \(m(x)=4\).

If \(m(z)=4\), then likewise \(\beta+\beta'=\pi/2\). By Proposition~\ref{prop:oneright}, the third vertices of \(T\) and \(T'\) are double, so \(T\) and \(T'\) are right triangles. Girard's formula gives
\[
\alpha+\beta>\frac{\pi}{2},\qquad\alpha'+\beta'>\frac{\pi}{2}.
\]
Adding these inequalities contradicts
\[
(\alpha+\alpha')+(\beta+\beta')=\pi.
\]

Hence \(z\) is triple, and Lemma~\ref{lem:hypotenuse} gives \(\alpha=\alpha'\), so \(\alpha=\alpha'=\pi/4\). By Lemma~\ref{lem:balanced-pencils}(d), all four angles at \(x\) equal \(\pi/4\).
\end{proof}

\subsection{Reflections and classification}\label{subsec:reflections}

For a projective line \(C=\PP(\Pi)\subset\RP^2\), let \(\sigma_C\) be the reflection of \(\RP^2\) in \(C\), induced by the orthogonal reflection \(s_\Pi\) of \(\R^3\) in \(\Pi\).

\begin{lemma}\label{lem:reflection-invariance}
Let \(\cA\) be an arrangement such that the lines through every vertex are equally spaced. Then \(\sigma_C(\cA)=\cA\) for every \(C\in\cA\).
\end{lemma}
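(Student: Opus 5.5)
The idea is to show that \(\sigma_C\) maps the set of lines of \(\cA\) to itself by walking along \(C\) and checking the lines through each vertex of \(C\) separately.

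Fix \(C\in\cA\). Every line \(D\in\cA\) with \(D\ne C\) meets \(C\) in exactly one point. That point lies on two lines of \(\cA\), so it is a vertex \(x\) of \(\cA\) on \(C\), and \(D\in\cA_x\). Hence
\[
\cA=\{C\}\cup\bigcup_{x}\cA_x ,
\]
where \(x\) runs over the vertices of \(\cA\) on \(C\). Since \(\sigma_C(C)=C\), it suffices to prove \(\sigma_C(\cA_x)=\cA_x\) for each vertex \(x\in C\).

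Fix such a vertex \(x=[u]\), with \(u\) a unit vector, and write \(C=\PP(\Pi)\). Then \(u\in\Pi\), so \(s_\Pi\) fixes \(u\). It therefore preserves \(u^\perp=T_x\RP^2\) and fixes \(x\). On \(u^\perp\) it acts as the orthogonal reflection of the plane \(u^\perp\) in the line \(T_xC=\Pi\cap u^\perp\).

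Since \(s_\Pi\) is linear and fixes \(u\), it maps each projective line through \(x\) to a projective line through \(x\). It sends \(D=\PP(\operatorname{span}(u,t))\) to \(\PP(\operatorname{span}(u,s_\Pi t))\). So on the pencil of lines through \(x\), \(\sigma_C\) acts through the tangent directions as the reflection of \(u^\perp\) in \(T_xC\).

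Now choose an orthonormal basis \(a,b\) of \(u^\perp\) with \(a\) spanning \(T_xC\). In the notation of Definition~\ref{def:pencil-at-point}, \(C\) then has direction \(0\), and the reflection sends direction \(\theta\) to \(-\theta\) in \(\R/\pi\Z\).

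By hypothesis the lines of \(\cA_x\) are equally spaced. With \(m=m(x)\), their directions are therefore \(\theta_0+j\pi/m\) for \(j=0,\dots,m-1\). Since \(C\in\cA_x\) has direction \(0\), we may take \(\theta_0=0\), so the set of directions is \(\{j\pi/m\}\). This set is invariant under \(\theta\mapsto-\theta\) modulo \(\pi\), because \(-j\pi/m\equiv(m-j)\pi/m\). Hence \(\sigma_C(\cA_x)=\cA_x\).

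Combining this over all vertices \(x\) on \(C\) gives \(\sigma_C(\cA)=\cA\). The argument needs at least one vertex on \(C\), which holds because \(\cA\) is essential and so has a line other than \(C\).

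There is no real obstacle here. The only point needing care is to state precisely that the action of \(\sigma_C\) on lines through \(x\) corresponds to the reflection of directions in \(T_x\RP^2\); this comes from the linearity of \(s_\Pi\) and the fact that it fixes \(u\).
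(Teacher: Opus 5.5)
Your proof is correct and follows essentially the same route as the paper's. Both arguments reduce to the pencil at the vertex \(C\cap D\), where \(\sigma_C\) acts on directions as a reflection fixing the direction of \(C\), and an equally spaced set of directions containing that direction is invariant. The only difference is cosmetic: you obtain \(\sigma_C(\cA_x)=\cA_x\) directly for each vertex \(x\) on \(C\), whereas the paper shows that \(\sigma_C\) maps \(\cA\) into itself and then uses that \(\sigma_C\) is an involution.
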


\begin{proof}
Let \(D\in\cA\), \(D\ne C\), and put \(v=C\cap D=[x]\). The reflection \(\sigma_C\) fixes \(v\) and acts on the lines through \(v\) by reflecting their tangent directions in \(x^\perp\) across that of \(C\); in terms of directions at \(v\) \textup{(}\S\textup{\ref{subsec:balanced-arr-P2})} this is \(\theta\mapsto2\theta_C-\theta\). The set of directions of an equally spaced pencil containing \(C\) is invariant under \(\theta\mapsto2\theta_C-\theta\), so \(\sigma_C(D)\in\cA\). Since also \(\sigma_C(C)=C\), \(\sigma_C\) maps \(\cA\) into itself. Since \(\sigma_C\) is an involution, \(\sigma_C(\cA)=\cA\).
\end{proof}

\begin{proposition}\label{prop:coxeter}
Let \(\cA\) be an arrangement such that \(\sigma_C(\cA)=\cA\) for every \(C\in\cA\), and let \(W\subset O(3)\) be the group generated by the reflections \(s_\Pi\), where \(\PP(\Pi)\in\cA\). Then \(W\) is a finite reflection group, and \(\cA\) is precisely the reflection arrangement of \(W\).
\end{proposition}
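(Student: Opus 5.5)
The proof will show that $W$ permutes a finite set of normal vectors, which forces finiteness, and then identify the reflecting planes of $W$ with the planes defining $\cA$.

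\emph{Invariance.} Let $N=\{\pm n_C : C\in\cA\}\subset\R^3$, a finite set of $2n$ unit vectors, and let $P=\{\Pi: \PP(\Pi)\in\cA\}$. For $C=\PP(\Pi)\in\cA$, the hypothesis $\sigma_C(\cA)=\cA$ means that $s_\Pi$ permutes the planes in $P$. Since $s_\Pi$ is orthogonal, it maps the normal line of a plane $\Pi'$ to the normal line of $s_\Pi(\Pi')$. Hence $s_\Pi(N)=N$.

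\emph{Finiteness.} Every generator of $W$ preserves $N$, so $W$ preserves $N$. This gives a homomorphism $W\to\mathrm{Sym}(N)$. Because $\cA$ is essential, $N$ spans $\R^3$, so an element of $W$ that fixes every vector of $N$ is the identity. The homomorphism is therefore injective, and $W$ is finite. It is generated by orthogonal reflections, so it is a finite reflection group in $O(3)$.

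\emph{Identification of the arrangement.} Certainly every $\Pi\in P$ is a reflecting plane of $W$. Conversely, let $s_H\in W$ be a reflection in a plane $H$. I will use the standard fact from the theory of finite reflection groups \cite[\S1.14]{Humphreys}: if a finite reflection group is generated by a set of reflections closed under conjugation, then every reflection in it belongs to that set. Closure under conjugation holds here. Indeed, for $w\in W$ we have $ws_\Pi w^{-1}=s_{w\Pi}$, and $w\Pi\in P$ because each generator permutes $P$. So the generating set $\{s_\Pi:\Pi\in P\}$ is closed under conjugation.

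To keep the argument self-contained, I would prove this fact through the root system. The set $\Phi=N$ is $W$-stable, consists of unit vectors, and satisfies $\Phi\cap\R u=\{\pm u\}$ for each $u\in\Phi$. So $\Phi$ is a root system in Humphreys' sense, and $W$ is generated by the reflections $s_\alpha$ with $\alpha\in\Phi$. For such a group, every reflection in $W$ is $s_\alpha$ for some $\alpha\in\Phi$ \cite[Prop.~1.14]{Humphreys}. The proof of that proposition goes through the simple system $\Delta$, exchange/length arguments, and the simple transitivity of $W$ on chambers. Consequently $H=\alpha^\perp$ for some $\alpha\in N$, so $H\in P$. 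Thus the reflecting planes of $W$ are exactly the planes in $P$, and $\cA$ is the reflection arrangement of $W$.

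The main obstacle is this last step: showing that $W$ contains no reflection other than the generators. Citing Humphreys reduces it to checking that $N$ is a root system, and that check is immediate from the invariance step.
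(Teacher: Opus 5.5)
Your proposal is correct and matches the paper's proof: the normals $\pm n_C$ form a $W$-stable root system in Humphreys' sense, $W$ is finite because it acts faithfully on this finite spanning set, and Humphreys' Prop.~1.14 shows that every reflection in $W$ is some $s_{n_C}$. The closure-under-conjugation remark is a harmless detour, since checking that $N$ is a root system already gives what Prop.~1.14 needs.
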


\begin{proof}
For each \(C=\PP(\Pi)\in\cA\), choose a unit normal \(n_C\) to \(\Pi\),
and set
\[
\Phi=\{\pm n_C:C\in\cA\}.
\]
Since \(\cA\) is essential, \(\Phi\) spans \(\R^3\). Moreover,
\(s_\Pi\) induces \(\sigma_C\) on \(\RP^2\), so the hypothesis
\(\sigma_C(\cA)=\cA\) implies \(s_\Pi(\Phi)=\Phi\). Thus \(\Phi\) is a
root system in the sense of \cite[\S1.2]{Humphreys}.

Since \(W\) preserves the finite
spanning set \(\Phi\), its action on \(\Phi\) is faithful, and hence
\(W\) is finite. By \cite[Prop.~1.14]{Humphreys}, every reflection in
\(W\) has reflecting plane \(n_C^\perp\) for some \(C\in\cA\).
Consequently, \(\cA\) is precisely the reflection arrangement of \(W\).
\end{proof}

\begin{proof}[Proof of Theorem~\textup{\ref{thm:balanced}}]
Let \(\cA\) be an irreducible point-balanced arrangement. By Proposition~\ref{prop:equal-spacing}, the lines through every vertex are equally spaced. Hence \(\sigma_C(\cA)=\cA\) for every \(C\in\cA\) by Lemma~\ref{lem:reflection-invariance}. Proposition~\ref{prop:coxeter} now shows that \(\cA\) is the reflection arrangement of the finite reflection group \(W\subset O(3)\) generated by the reflections in its lines.
\end{proof}

\begin{proof}[Proof of Corollary~\ref{cor:panov}]
Let \(\cA\) be a Hirzebruch arrangement of \(n=3k\) lines. If \(k=1\), the three lines of \(\cA\) are not concurrent, so \(\cA\) is projectively equivalent to the coordinate triangle, the reflection arrangement of \((A_1)^3\). If \(k\ge2\), then \(\cA\) is irreducible by Lemma~\ref{lem:stability}. 
By the proof of Theorem~\ref{thm:correspondence}, there exists \(F\in\SL\) such that \(\cA'=F(\cA)\) is point-balanced. By Theorem~\ref{thm:balanced}, \(\cA'\) is the reflection arrangement of the finite reflection group \(W\subset O(3)\) generated by the reflections \(s_\Pi\), \(\PP(\Pi)\in\cA'\).

We claim that \(W\) acts irreducibly on \(\R^3\). Otherwise there is a nontrivial \(W\)-invariant orthogonal decomposition \(\R^3=V_1\oplus V_2\), and one of the summands, say \(V_1\), is one-dimensional. For every \(C=\PP(\Pi)\in\cA'\), both summands are invariant under \(s_\Pi\), whose \((-1)\)-eigenspace is \(\R n_C\); hence \(n_C\) lies in \(V_1\) or in \(V_2\). As \(\cA'\) is essential, the normals span \(\R^3\), so one of them, say \(n_{C_0}\), spans \(V_1\), and all the others lie in \(V_2=n_{C_0}^\perp\). Then every line of \(\cA'\) other than \(C_0\) passes through \([n_{C_0}]\), so \(\cA'\) is a near-pencil, contrary to irreducibility.

Thus \(W\) is an irreducible finite reflection group on \(\R^3\). By the classification of finite reflection groups \cite[\S2.4 and Thm.~2.7]{Humphreys}, \(W\) is of type \(A_3\), \(B_3\) or \(H_3\). Let \(W_0\subset O(3)\) be the standard reflection group of the same type. By \cite[Prop.~2.1]{Humphreys}, there is \(g\in O(3)\) such that \(gWg^{-1}=W_0\). Since conjugation carries reflecting planes to reflecting planes, \(gF(\cA)=g(\cA')\) is the reflection arrangement of \(W_0\).
\end{proof}

\begin{remark}\label{rem:vee}
Let \(\cA\) be an irreducible point-balanced arrangement of \(n\) lines. In the normalisation of \cite[Definition~2.1]{dBPV}, the vectors \(\alpha_C=\sqrt{3/n}\,n_C\), \(C\in\cA\), form an equal-length irreducible Euclidean \(\vee\)-system in \(\R^3\). Conversely, normalising the vectors of any such system gives the unit normals of an irreducible point-balanced arrangement. Thus Theorem~\ref{thm:balanced} gives an independent proof of the rank-three classification \cite[Corollary~1.4]{dBPV}.
\end{remark}

\bibliographystyle{amsplain}
\bibliography{references}

\end{document}